\documentclass[11pt]{article}

\usepackage[utf8]{inputenc}
\usepackage[T1]{fontenc}
\usepackage[english]{babel}
\usepackage{csquotes}
\usepackage{lipsum}
\usepackage{amsmath, amssymb, amsthm, mathtools}
\usepackage{dsfont}        

\usepackage{graphicx}
\usepackage{geometry}
\usepackage{caption}
\usepackage{subcaption}
\usepackage{float}
\usepackage{multirow}
\usepackage{array}
\usepackage{algorithmic}
\usepackage{booktabs}
\usepackage{authblk}

\usepackage{hyperref}
\usepackage[numbers,sort&compress]{natbib}
\usepackage{nameref}

\usepackage{icomma}        
\usepackage[dvipsnames]{xcolor} 
\usepackage{comment}       
\usepackage{todonotes}     
\usepackage{algorithm}
\usepackage{algorithmic}
\usepackage{draftwatermark}
\usepackage{colortbl}
\usepackage{ulem}  

\definecolor{a}{rgb}{0.83, 0.83, 0.83}
\definecolor{b}{rgb}{0.99,0.99,0.99}
\SetWatermarkText{}
\SetWatermarkScale{1}
\SetWatermarkColor[gray]{0.9}

\newcommand{\et}{\textit{et al.}}
\newcommand{\bl}[1]{\textcolor{black}{#1}}

\theoremstyle{plain}
\newtheorem{theorem}{Theorem}[section]
\newtheorem{proposition}{Proposition}[section]
\newtheorem{lemma}{Lemma}[section]
\newtheorem{corollary}{Corollary}[section]

\theoremstyle{definition}
\newtheorem{definition}{Definition}[section]

\newtheorem{remark}{Remark}[section]

\begin{document}


\pagenumbering{arabic}
\title{Theoretical Analysis of Thermodynamic Matrix Inversion: First-order Equivalence to 
Preconditioned Gradient Descent and Implications for Analog Computing}
\author{Gerhard Kirsten\textsuperscript{1,}\textsuperscript{*}, 
Michael Selby\textsuperscript{1}, 
Janith Petangoda\textsuperscript{1},\\
Olle Halqvist Elias\textsuperscript{1}, James Meech\textsuperscript{1}, 
Phillip Stanley-Marbell\textsuperscript{1,2}}
\date{}
\maketitle
\footnotetext[1]{Signaloid, United Kingdom}
\footnotetext[2]{University of Cambridge, United Kingdom}
\renewcommand{\thefootnote}{*}
\footnotetext{Corresponding author: gerhard.kirsten@signaloid.com}
\thispagestyle{empty}

\begin{abstract}
%
%
{
Recent research has demonstrated the possibility of exploiting the 
thermodynamics of coupled electrical oscillators to implement computational 
tasks such as matrix inversion. While physical implementations rely on thermal 
noise to drive equilibration, we show that the underlying dynamics reduce to 
a deterministic iterative algorithm. Building on the framework of Aifer \et, 
we analyze the moment evolution of the Ornstein-Uhlenbeck process governing 
thermodynamic symmetric positive definite (SPD) matrix inversion. We prove that to a first-order approximation, the covariance dynamics are 
mathematically identical to preconditioned gradient descent on the Frobenius 
norm of the residual $\tilde{A}^{-1}A-I$. This equivalence demonstrates that 
thermal fluctuations, while essential for physical thermodynamic hardware, 
are algorithmically redundant for convex problems with a single global minimum. 
We validate the resulting algorithm against Thermox (Duffield \et), a 
stochastic thermodynamic simulator, achieving speedups exceeding 100,000-fold 
while remaining competitive with the Newton-Schulz iteration. We also 
demonstrate acceleration through Schur complement techniques. These results 
establish a rigorous link between analog thermodynamic computing, statistical 
physics, and deterministic optimization methods.
}

\end{abstract}

\section*{Introduction}
Matrix inversion represents one of the most fundamental yet computationally-demanding operations 
in modern scientific computing, serving as a cornerstone for algorithms across diverse fields including 
Bayesian inference~\cite{leonard_bayesian_1992}, computational physics~\cite{franklin_matrix_2013}, 
medical imaging~\cite{cahill_preliminary_1970, yeung_pinv_2024}, portfolio optimization~\cite{markowitz_portfolio_1952}, 
numerical weather prediction~\cite{daley_atmospheric_1991, bannister_review_2017}, and machine learning applications. 
\bl{The ubiquity of matrix inversion stems from both its theoretical significance and practical utility across scales, from 
inverting small dense matrices in statistical parameter estimation to handling large sparse systems in climate modeling, 
where operational weather centers routinely process millions of observations~\cite{bannister_review_2017}}.

While linear systems are typically solved directly rather than through matrix inversion, particularly when coefficient 
matrices are sparse or structured~\cite{saad_iterative_2003}, computing the inverse becomes essential in several critical 
scenarios. Matrix inversion is preferred when solving multiple systems sharing the same coefficient matrix, enabling 
efficient reuse rather than repeated factorizations~\cite{golub_matrix_2013, trefethen_numerical_1997}. Moreover, matrix 
inversion underpins closed-form solutions in statistics and data analysis, including linear regression, covariance 
estimation~\cite{press_numerical_2007, watkins_fundamentals_2010}, multivariate distance computations such as the Mahalanobis 
distance~\cite{mahalanobis_generalised_1936}, and mean-variance portfolio optimization~\cite{markowitz_portfolio_1952}. In 
statistical applications, matrix conditioning becomes increasingly important as the number of \bl{variables, $p$, grows relative 
to the number of observations, $n$~\cite{ledoit_well-conditioned_2004, tabeart_improving_2019}}. Additionally, approximate inverses 
serve as critical preconditioners in iterative methods for large-scale problems~\cite{demmel_applied_1997, higham_accuracy_2002}, 
while specialized applications such as variational data assimilation require robust inversion of observation error covariance 
matrices~\cite{tabeart_improving_2019}.

Despite its central importance, matrix inversion faces computational and numerical stability limitations that constrain the 
scalability of modern algorithms. Direct inversion methods, such as \bl{Gaussian-elimination-based} approaches~\cite{golub_matrix_2013}, 
require $O(n^3)$ floating point operations and $O(n^2)$ storage while potentially becoming numerically unstable for \bl{poorly-conditioned} 
matrices. The conditioning challenge appears across many applications: in high-dimensional statistics where sample covariance matrices 
can become singular when $p > n$~\cite{ledoit_well-conditioned_2004}, in discretized partial differential equations where condition numbers 
scale with mesh refinement, and in data assimilation where interchannel correlations can lead to challenging matrix properties~\cite{tabeart_improving_2019}. 
To address these challenges, a rich literature has developed on approximate matrix inversion, including iterative methods such as the 
Schulz--Hotelling algorithm \cite{schulz_iterative_1933, hotelling_analysis_1943} and its variants~\cite{soleymani_2012, soleymani_2013}, 
which achieve high-order convergence while exploiting sparsity and parallelism for scalable computation.

The application of randomized linear algebra techniques to approximate matrix inversion has emerged as a particularly powerful paradigm for 
large-scale problems. \bl{Gower and Richtárik~\cite{gower_randomized_2017} developed a stochastic framework that reinterprets classical quasi-Newton methods, 
such as the Broyden-Fletcher-Goldfarb-Shanno (BFGS) and Davidon-Fletcher-Powell (DFP) algorithms. This framework treats these methods and the Broyden family as 
randomized matrix inversion algorithms with global linear convergence guarantees}. Building on this foundation, Richtárik and Takáč~\cite{richtarik_stochastic_2020} 
generalized these randomized approaches to the 
broader problem of solving consistent linear systems. Complementing these algorithmic advances, Wenger and Hennig~\cite{wenger_probabilistic_2020} 
introduced a probabilistic perspective that explicitly quantifies numerical uncertainty arising from finite computational resources, particularly 
valuable for structured matrices like kernel Gram matrices prevalent in machine learning applications. Among these emerging approaches, thermodynamic 
computing represents perhaps the most novel departure from conventional computational paradigms. Aifer \et~\cite{aifer_thermodynamic_2024} 
demonstrated that classical thermodynamic principles can be harnessed for linear algebraic computations, including SPD matrix inversion, by mapping 
linear systems onto coupled harmonic oscillator networks that exploit thermal equilibration for analog computation. Their approach leverages the 
natural dynamics of physical systems rather than discrete arithmetic operations, treating thermal noise as a computational resource rather than a source of error.

{Thermodynamic computing represents a distinct paradigm within the broader landscape of physics-based analog computing. It sits alongside approaches 
such as oscillatory neural networks, which exploit the collective dynamics of coupled oscillators for various computational tasks. These dynamics range from 
synchronization to chaotic behavior and are applied across pattern recognition, combinatorial optimization, and signal processing~\cite{todri_sanial_computing_2024}}. 
{However, unlike oscillatory networks that often encode information through phase relationships, thermodynamic computing fundamentally 
relies on stochastic thermal fluctuations to drive the system toward statistical equilibrium.} While most oscillatory computing research has focused on 
applications such as associative memory and graph coloring problems, the connection between physical dynamics and fundamental linear algebra operations 
remained largely unexplored until recently~\cite{aifer_thermodynamic_2024}.

This work provides a theoretical analysis of thermodynamic approaches to SPD matrix inversion, revealing that the stochastic processes underlying these methods 
possess an elegant mathematical structure. {In the framework of Aifer \et~\cite{aifer_thermodynamic_2024}, the matrix to be inverted encodes 
the potential energy landscape of a physical system evolving under Langevin dynamics. Standard statistical physics dictates 
that the probability density of such a system evolves deterministically according 
to the Fokker-Planck equation. However, solving the full Fokker-Planck equation is generally intractable for high-dimensional systems \cite{liu2022neural, sun2015numerical}. We show that for the 
quadratic potentials inherent to linear algebra, the system's Gaussian nature allows us to track the evolution of the autocorrelation matrix directly, reducing 
the problem to a tractable deterministic iteration. Table~\ref{tab:fokker_planck_comparison} summarizes the key computational distinctions between this standard PDE formulation 
and our direct covariance evolution derivation.} 

\begin{table}[htbp]
\centering
\caption{\bl{Key Distinctions: Fokker-Planck PDE versus Direct Covariance Evolution. 
The proposed direct method avoids the curse of dimensionality inherent in standard PDE solvers by exploiting the Gaussian structure of the problem to track only the statistical moments required for matrix inversion.}}
\label{tab:fokker_planck_comparison}
\resizebox{\textwidth}{!}{%
\begin{tabular}{lcc}
\toprule
\textbf{Attribute} & 
\begin{tabular}{@{}c@{}} \textbf{Standard Fokker-Planck} \\ \textbf{Approach} \end{tabular} & 
\begin{tabular}{@{}c@{}} \textbf{Our Direct Method} \\ \textbf{(This Work)} \end{tabular} \\ 
\toprule
\textbf{Computational Tractability} & 
\begin{tabular}{@{}c@{}} Intractable for high dimensions \\ (requires solving $n$-dimensional PDE) \end{tabular} & 
\begin{tabular}{@{}c@{}} Tractable matrix iteration \\ ($O(n^3)$ per step) \end{tabular} \\
\cmidrule(lr){1-3}
\textbf{Curse of Dimensionality} & 
\begin{tabular}{@{}c@{}} Exponential scaling $O(M^n)$ \\ (mesh discretization) \end{tabular} & 
\begin{tabular}{@{}c@{}} Polynomial scaling $O(n^3)$ \\ (matrix operations) \end{tabular} \\
\cmidrule(lr){1-3}
\textbf{What is Computed} & 
\begin{tabular}{@{}c@{}} Full probability density $P(x,t)$ \\ (complete distributional information) \end{tabular} & 
\begin{tabular}{@{}c@{}} Only covariance $\Sigma_t = \langle x \otimes x \rangle$ \\ (sufficient for matrix inversion) \end{tabular} \\
\cmidrule(lr){1-3}
\textbf{Gaussian Structure} & 
\begin{tabular}{@{}c@{}} Not exploited \\ (general PDE framework) \end{tabular} & 
\begin{tabular}{@{}c@{}} Directly exploited \\ (closed-form moment evolution) \end{tabular} \\
\cmidrule(lr){1-3}
\textbf{Practical Implementation} & 
\begin{tabular}{@{}c@{}} Specialized PDE solvers \\ (research-grade software) \end{tabular} & 
\begin{tabular}{@{}c@{}} Standard linear algebra libraries \end{tabular} \\
\bottomrule
\end{tabular}}
\end{table}

We prove that to first-order in the discretized dynamics, the autocorrelation matrix of the Ornstein-Uhlenbeck process at the heart of thermodynamic matrix 
inversion~\cite{aifer_thermodynamic_2024} is mathematically equivalent to preconditioned gradient descent on a carefully constructed loss function. 
This approximate equivalence demonstrates that complex stochastic thermal dynamics reduce to a relatively simple deterministic optimization process. \bl{While} physical 
implementations require thermal fluctuations to drive equilibration, the evolution of the autocorrelation matrix that encodes the solution follows a purely 
deterministic trajectory. This theoretical connection provides new insights into both the nature of thermodynamic computing and the relationship between 
physical processes and optimization algorithms, establishing a bridge between these seemingly disparate computational approaches and revealing shared 
mathematical foundations that were previously unrecognized.

By revealing the optimization structure underlying thermodynamic linear algebra~\cite{aifer_thermodynamic_2024}, we establish mathematical 
equivalences between approximate physical processes and optimization algorithms, gaining deeper insight into both domains. This theoretical perspective 
contributes to the broader understanding of how physical analogies can inspire computational methods, helping clarify when physical implementations 
offer unique advantages versus when they primarily serve as sources of algorithmic inspiration that can be realized through conventional digital means. 
Such theoretical analysis provides a framework for understanding the fundamental computational content of unconventional computing \bl{approaches~\cite{tye2023materials}}, 
bridging the gap between the engineering challenges of building novel hardware and the mathematical principles that govern their operation.

This article makes the following contributions to the state of the art:
\begin{enumerate}
    \item \textbf{New insight into thermodynamics-based SPD matrix inversion as a transformation of random variables.} The {\nameref{sec:ou_process} section} shows the link between 
    recent work on thermodynamics-based linear algebra in analog electrical systems and digital arithmetic on discretizations of probability density 
    functions. {Unlike standard stochastic simulations that typically rely on sampling Langevin dynamics, we demonstrate a method based on 
    deterministic arithmetic on discretized and compressed representations of arbitrary continuous probability density functions.} 
    This demonstrates that thermodynamic processes for computational problems can be faithfully implemented 
    on discrete-valued digital computing systems {without the sampling variability and convergence challenges of Monte-Carlo-type computations}.
    
    \item \textbf{First-order equivalence to preconditioned gradient descent optimization.} {While the general correspondence between Langevin 
    dynamics and deterministic probability evolution (Fokker-Planck) is established physics, {the \nameref{sec:gradient_descent} section} derives the specific 
    closed-form evolution for the covariance matrix in the context of matrix inversion. We prove when we perform a first-order approximation of the physical evolution with respect to the discrete time step, it is mathematically identical 
    to preconditioned gradient descent,} revealing that complex stochastic thermal dynamics for approximate SPD matrix inversion reduce to deterministic 
    iterative optimization and that the digital domain use of thermal fluctuations (use of randomness) are algorithmically {redundant} for this class of {convex} 
    problems when they are translated from their electrical analogs to the digital domain. This equivalence provides an alternative implementation 
    pathway that avoids the engineering challenges associated with analog hardware, such as the unavoidable tradeoff between ADC resolution, power dissipation, 
    and sample rate~\cite{meech_2023_data}, while maintaining the same mathematical foundations.

    {\item \textbf{Design implications for thermodynamic hardware.} We observe that the equivalence between first-order thermodynamic evolution and preconditioned gradient descent 
    implies specific criteria for efficient hardware realizations. This perspective suggests that the thermodynamic time constant corresponds to the optimal learning 
    rate $\Delta t^* = 1/(\lambda_{\max} + \lambda_{\min})$ and that physical parameters, such as coupling strengths and thermal reservoir properties, determine the 
    effective preconditioner for the target matrix $A$.}

    {\item \textbf{Historical connection to the von Neumann-Ulam method.} We outline the progression from the probabilistic sampling of the 1950s~\cite{forsythe_matrix_1950}, 
    through the physical analogies of thermodynamic computing, to the deterministic iteration presented here. We show that the spectral radius condition serves as the consistent 
    mathematical link between these approaches, demonstrating that the core mathematical content has remained constant even as the computational implementations have evolved.}
    
    \item \textbf{Schur complement acceleration.} {The \nameref{sec:numerical_experiments} section} demonstrates how the resulting optimization framework can be 
    significantly accelerated through Schur complement decompositions that exploit the natural block structure present in these linear algebraic 
    problems, providing practical speedups for the digital implementation. {We validate the presented algorithm against a digital thermodynamic 
    computing simulator, Thermox~\cite{duffield2024thermox}, and we obtain speedups in excess of 100,000-fold for certain problems.}
\end{enumerate}

{Our analysis clarifies the distinct role of stochastic noise in these computing paradigms by revealing a direct 
mathematical connection between analog thermodynamic processes and classical optimization algorithms. We find that for problems 
with a convex potential and a single global minimum, the essential computational dynamics are captured by a deterministic ordinary 
differential equation. This implies that the digital realization does not require the stochasticity that physical thermodynamic 
computing treats as a resource, as the same mathematical result is achieved through straightforward iterative optimization. 
This observation allows digital solvers to avoid the computational overhead of noise generation while retaining the 
algorithmic robustness of the framework. Whether similar optimization equivalences exist for other analog 
computing approaches remains an open question that merits further theoretical investigation.}

\bl{This analysis also contributes to a broader theoretical understanding of unconventional computing paradigms. By revealing the optimization structure underlying thermodynamic computing, 
we demonstrate how theoretical analysis can illuminate the mathematical foundations of physically-inspired computational methods. This suggests a general methodology for understanding 
unconventional computing approaches: identify the underlying optimization problem, analyze the physical dynamics as an analog solver, 
and extract the essential mathematical structure for digital implementation.}

\bl{This framework helps clarify when physical implementations offer unique advantages versus when they primarily serve as sources of algorithmic
inspiration that can be realized through conventional digital means. The theoretical perspective provides a bridge between the engineering 
challenges of building unconventional computing hardware and the mathematical principles that govern their operation, enabling more informed 
design decisions and performance predictions for thermodynamic computing systems.}

%
%

\section*{\bl{Results}}

\subsection*{First-order Equivalence to Preconditioned Gradient Descent}
\label{sec:gradient_descent}

We establish that the discretized Ornstein-Uhlenbeck process for SPD matrix inversion is equivalent to preconditioned gradient descent up to first-order in the discrete time step. We first derive the evolution equation for the \bl{covariance} matrix $\Sigma_i = \langle x_i \otimes x_i \rangle$ under the discretization scheme of Definition~\ref{def:discrete_ou}.

\begin{lemma}[Covariance Evolution Under Discretized \bl{O-U} Process]
\label{lem:autocorr_evolution}
{Let $A \in \mathbb{R}^{n \times n}$ be a symmetric positive definite matrix. Let $(x_i)_{i \geq 0}$ denote the Euler-Maruyama approximation of the Ornstein-Uhlenbeck process defined in Definition~\ref{def:discrete_ou}.
Then}, the \bl{covariance} matrix $\Sigma_i = \langle x_i \otimes x_i \rangle$ evolves according to:
\begin{equation}
\Sigma_{i+1} = (I - \Delta t A) \Sigma_{i} (I - \Delta t A) + 2\Delta t.
\end{equation}
\end{lemma}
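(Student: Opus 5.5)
We will argue directly from the one-step update of Definition~\ref{def:discrete_ou}. Its Euler--Maruyama form is $x_{i+1} = x_i - \Delta t\, A x_i + \sqrt{2\Delta t}\,\xi_i$, with $\xi_i \sim \mathcal{N}(0,I)$ i.i.d. and independent of $x_i$. We take the unit temperature and friction normalization implicit in the statement, so the noise covariance per step is $2\Delta t\, I$; the term ``$2\Delta t$'' in the lemma is read as $2\Delta t\, I$. Writing $M = I - \Delta t A$, the update becomes $x_{i+1} = M x_i + \sqrt{2\Delta t}\,\xi_i$.

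Next we form the outer product $x_{i+1}\otimes x_{i+1} = x_{i+1}x_{i+1}^{\top}$ and expand it into four terms:
\[
M x_i x_i^{\top} M^{\top} + \sqrt{2\Delta t}\,\bigl(M x_i \xi_i^{\top} + \xi_i x_i^{\top} M^{\top}\bigr) + 2\Delta t\, \xi_i \xi_i^{\top}.
\]
We then take expectations term by term, using linearity to pull the deterministic matrix $M$ outside. The first term gives $M\Sigma_i M^{\top}$. The cross terms vanish: by independence and zero mean of $\xi_i$, $\langle x_i \xi_i^{\top}\rangle = \langle x_i\rangle\langle \xi_i\rangle^{\top} = 0$. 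The last term gives $2\Delta t\,\langle \xi_i\xi_i^{\top}\rangle = 2\Delta t\, I$.

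Finally, since $A$ is symmetric, $M^{\top} = M = I - \Delta t A$. This yields $\Sigma_{i+1} = (I-\Delta t A)\Sigma_i(I-\Delta t A) + 2\Delta t\, I$, which is the claimed recursion. Symmetry of $A$ is needed only here, to drop the transpose; positive definiteness is not needed for the identity itself, only later for convergence.

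The calculation is routine. The one point needing care is the vanishing of the cross terms. This relies on $\xi_i$ being independent of $x_i$, which holds because $x_i$ is a measurable function of $x_0, \xi_0, \dots, \xi_{i-1}$ only (the Markov/adapted structure of the scheme). It also requires the initial condition $x_0$ to be independent of the noise sequence, and we would state this assumption explicitly. A further small point is that $\Sigma_i$ here is the second moment $\langle x_i x_i^{\top}\rangle$, not the centered covariance. The recursion holds for the second moment regardless of the mean, because the noise is centered. If $x_0$ has mean zero, then every $x_i$ has mean zero, and the second moment coincides with the covariance.
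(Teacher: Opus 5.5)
Your proposal is correct and follows essentially the same route as the paper's proof: expand the outer product of $x_{i+1} = (I-\Delta t A)x_i + \sqrt{2\Delta t}\,Z_i$, remove the cross terms by independence (you rightly add that the noise has zero mean, which the paper leaves implicit), use $\langle Z_i Z_i^{\top}\rangle = I$, and drop the transposes by symmetry of $A$ to reach $(I-\Delta t A)\Sigma_i(I-\Delta t A) + 2\Delta t\, I$. Your extra remarks are sound refinements of assumptions the paper does not state: that $x_i$ is adapted, that $x_0$ is independent of the noise, and that $\Sigma_i$ is a second moment rather than a centered covariance.
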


\begin{proof}
We compute the \bl{covariance} at time step $i+1$:
\begin{align*}
\Sigma_{i+1} &= \langle x_{i+1} \otimes x_{i+1} \rangle \\
&= \left\langle \left[(I - \Delta t A) x_i + \sqrt{2\Delta t} Z_i\right] \otimes \left[(I - \Delta t A) x_i + \sqrt{2\Delta t} Z_i\right] \right\rangle.
\end{align*}
Expanding the outer product and using linearity of expectation:
\begin{align*}
\Sigma_{i+1} &= (I - \Delta t A)\langle x_i \otimes x_i \rangle(I - \Delta t A)^{\top} + \sqrt{2\Delta t}\langle Z_i \otimes x_i \rangle(I - \Delta t A)^{\top} \\
&\quad + (I - \Delta t A)\sqrt{2\Delta t}\langle x_i \otimes Z_i \rangle + 2\Delta t\langle Z_i \otimes Z_i \rangle.
\end{align*}
Since $x_i$ and $Z_i$ are independent, the cross terms vanish: $\langle x_i \otimes Z_i \rangle = \langle Z_i \otimes x_i \rangle = 0$. Moreover, $\langle Z_i \otimes Z_i \rangle = I$ since $Z_i$ has independent standard normal components, and $(I - \Delta t A)^{\top} = I - \Delta t A$ since $A$ is symmetric. Therefore:
\begin{equation}
\Sigma_{i+1} = (I - \Delta t A) \Sigma_{i} (I - \Delta t A) + 2\Delta t I.
\end{equation}
\end{proof}
This evolution equation reveals the underlying structure of the \bl{covariance} matrix of the discretized process. In order to prove our main result, we derive one more necessary algebraic property of the discretized process.

\begin{lemma}[Commutativity of $A$ and $\Sigma_{i}$]
\label{lem:commute}
Under the conditions of Lemma~\ref{lem:autocorr_evolution}, if the initial \bl{covariance} matrix $\Sigma_{0}$ commutes with $A$, then $\Sigma_{i}$ commutes with $A$ for all $i \geq 0$. In particular, if $\Sigma_{0} = \alpha I$ for some scalar $\alpha \geq 0$, then $A\Sigma_{i} = \Sigma_{i} A$ for all $i$.
\end{lemma}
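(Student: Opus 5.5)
The plan is induction on $i$, using the recursion of Lemma~\ref{lem:autocorr_evolution}, $\Sigma_{i+1} = (I - \Delta t A)\Sigma_i(I - \Delta t A) + 2\Delta t I$.

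The base case $i=0$ is the hypothesis $A\Sigma_0 = \Sigma_0 A$.

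For the inductive step, assume $A\Sigma_i = \Sigma_i A$ and set $B = I - \Delta t A$. I will first record that $A$ commutes with $B$: $B$ is a polynomial in $A$, so $AB = A - \Delta t A^2 = BA$. Using this together with the inductive hypothesis, I will move $A$ through the product one factor at a time:
\begin{equation*}
A B \Sigma_i B = B A \Sigma_i B = B \Sigma_i A B = B \Sigma_i B A .
\end{equation*}
The noise term $2\Delta t I$ commutes with every matrix. Hence
\begin{equation*}
A\Sigma_{i+1} = AB\Sigma_i B + 2\Delta t A = B\Sigma_i B A + 2\Delta t A = \Sigma_{i+1} A ,
\end{equation*}
which closes the induction.

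For the special case, $\Sigma_0 = \alpha I$ trivially commutes with $A$, so the general statement applies. As an optional cross-check, one can unroll the recursion to the closed form
\begin{equation*}
\Sigma_i = \alpha B^{2i} + 2\Delta t \sum_{k=0}^{i-1} B^{2k}.
\end{equation*}
This exhibits $\Sigma_i$ as a polynomial in $A$, which makes the commutation immediate.

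There is no real obstacle here. The only point needing care is that the recursion uses $B^{\top} = B$, which relies on the symmetry of $A$. That symmetry is already built into Lemma~\ref{lem:autocorr_evolution}, so no transpose issues arise in the inductive step.
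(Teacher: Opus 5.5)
Your proof is correct and follows the paper's argument essentially step for step: induction on $i$ via the recursion of Lemma~\ref{lem:autocorr_evolution}, moving $A$ past $I-\Delta t A$, then past $\Sigma_i$ by the inductive hypothesis, then past the second factor, with the $2\Delta t I$ term commuting trivially. Your closed-form cross-check $\Sigma_i = \alpha B^{2i} + 2\Delta t\sum_{k=0}^{i-1}B^{2k}$ for $\Sigma_0=\alpha I$ is also correct; it gives the slightly stronger fact that $\Sigma_i$ is a polynomial in $A$, which the paper does not state.
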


\begin{proof}
We proceed by induction. The base case holds by assumption: $A\Sigma_{0} = \Sigma_{0} A$.

For the inductive step, assume $A\Sigma_{i} = \Sigma_{i} A$. From Lemma~\ref{lem:autocorr_evolution}:
\begin{equation}
\Sigma_{i+1} = (I - \Delta t A) \Sigma_{i} (I - \Delta t A) + 2\Delta t I.
\end{equation}

Since $\Sigma_{i}$ is a \bl{covariance} matrix of real random variables, it is symmetric (see \cite[p. 190]{papoulis1991}). We verify commutativity:
\begin{align*}
A\Sigma_{i+1} &= A[(I - \Delta t A) \Sigma_{i} (I - \Delta t A) + 2\Delta t I] \\
&= A(I - \Delta t A) \Sigma_{i} (I - \Delta t A) + 2\Delta t A \\
&= (I - \Delta t A) A \Sigma_{i} (I - \Delta t A) + 2\Delta t A,
\end{align*}
where we used that $A$ and $(I - \Delta t A)$ commute.

By the inductive hypothesis:
\begin{align}
A\Sigma_{i+1} &= (I - \Delta t A) \Sigma_{i} A (I - \Delta t A) + 2\Delta t A \\
&= (I - \Delta t A) \Sigma_{i} (I - \Delta t A) A + 2\Delta t A \\
&= [(I - \Delta t A) \Sigma_{i} (I - \Delta t A) + 2\Delta t I] A \\
&= \Sigma_{i+1} A.
\end{align}
Therefore, by induction, $A\Sigma_{i} = \Sigma_{i} A$ for all $i \geq 0$.
\end{proof}
We now present our central theoretical contribution: the digital implementation of the linearized thermodynamic matrix inversion reduces to preconditioned gradient descent on a carefully constructed loss function. This equivalence {establishes a direct mapping between the stochastic process and standard optimization machinery}.

\begin{theorem}[Approximate Ornstein-Uhlenbeck Process as Preconditioned Gradient Descent for SPD Matrix Inversion]
\label{theo:ou_gradient_descent}
Let $A \in \mathbb{R}^{n \times n}$ be symmetric positive definite and $x_0 = \mathbf{0}$. The discretized Ornstein-Uhlenbeck process produces iterative approximations to $A^{-1}$ via preconditioned gradient descent. Specifically, the \bl{covariance} matrix $\Sigma_i$, which approximates $A^{-1}$, evolves according to
\begin{equation}
\label{eq:grad_desc}
\Sigma_{i+1} = \Sigma_i - 2\Delta t (\Sigma_i A - I)+O\left(\Delta t^2 \right).
\end{equation}
Upon neglecting quadratic terms of the time step, this update rule is precisely preconditioned gradient descent minimizing $L(\Sigma) = \|\Sigma A - I\|_F^2$ with preconditioner $P = A$ and learning rate $\Delta t$.
\end{theorem}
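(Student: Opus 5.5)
The plan is to start from the exact covariance recursion of Lemma~\ref{lem:autocorr_evolution}, $\Sigma_{i+1} = (I - \Delta t A)\Sigma_i(I - \Delta t A) + 2\Delta t I$, and expand the product in powers of $\Delta t$. Multiplying out gives
\begin{equation*}
\Sigma_{i+1} = \Sigma_i - \Delta t (A\Sigma_i + \Sigma_i A) + \Delta t^2 A\Sigma_i A + 2\Delta t I .
\end{equation*}

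The first substantive step is to collapse the symmetric pair $A\Sigma_i + \Sigma_i A$ into $2\Sigma_i A$. This is exactly what Lemma~\ref{lem:commute} provides. Since $x_0 = \mathbf{0}$ is deterministic, we have $\Sigma_0 = 0 = 0\cdot I$. This is the case $\alpha = 0$ of the lemma, so $A\Sigma_i = \Sigma_i A$ for every $i$. Substituting yields
\begin{equation*}
\Sigma_{i+1} = \Sigma_i - 2\Delta t(\Sigma_i A - I) + \Delta t^2 A\Sigma_i A .
\end{equation*}

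To justify calling the last term $O(\Delta t^2)$, I will check that $\Sigma_i$ stays bounded uniformly in $i$. Diagonalize $A = Q\Lambda Q^\top$. By commutativity and $\Sigma_0 = 0$, $\Sigma_i$ is diagonal in the same basis, and each eigenvalue obeys the scalar recursion $s_{i+1} = (1-\Delta t\lambda)^2 s_i + 2\Delta t$. Under the stability condition $|1-\Delta t\lambda| < 1$, this recursion is monotone and bounded by its fixed point $2/(\lambda(2-\Delta t\lambda))$. Hence $\|A\Sigma_i A\|$ is bounded independently of $i$, and the remainder is genuinely $O(\Delta t^2)$. This gives \eqref{eq:grad_desc}.

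For the optimization interpretation, I will compute the gradient of $L(\Sigma) = \|\Sigma A - I\|_F^2 = \operatorname{tr}\big((\Sigma A - I)(\Sigma A - I)^\top\big)$. Its differential is $2\operatorname{tr}\big((\Sigma A - I)A\, d\Sigma^\top\big)$, so $\nabla L(\Sigma) = 2(\Sigma A - I)A$. A preconditioned step with $P$ applied on the right, chosen so that $P$ undoes the trailing factor $A$, reads $\Sigma_{i+1} = \Sigma_i - \Delta t\,\nabla L(\Sigma_i)\,P$. With the preconditioner acting as $A^{-1}$ on the trailing factor, this reduces to $\Sigma_i - 2\Delta t(\Sigma_i A - I)$, which is the first-order recursion derived above.

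I expect the main obstacle to be exactly this identification step, because the statement names the preconditioner as $P = A$. I will first try to phrase the match through the natural (non-Euclidean) metric $\langle U, V\rangle_A = \operatorname{tr}(U A V^\top)$ on the space of symmetric matrices. The gradient of $L$ in this metric is $\nabla L \, A^{-1} = 2(\Sigma A - I)$, so the inner product weighted by $A$ plays the role of the preconditioner. This makes the correspondence with the statement exact.

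Commutativity, via Lemma~\ref{lem:commute}, also guarantees that the left and right conventions for the preconditioner agree. If this reading is not the intended one, the fallback is simply to state the convention explicitly. The algebraic core, namely the first-order expansion together with the commutativity reduction, is unaffected by which convention is used.
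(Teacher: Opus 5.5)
Your proposal is correct and follows the paper's proof. Both use the recursion of Lemma~\ref{lem:autocorr_evolution}, commutativity from Lemma~\ref{lem:commute} with $\Sigma_0 = 0\cdot I$, the first-order expansion, $\nabla L = 2(\Sigma A - I)A$, and then identify the preconditioned step. Your additions are sound refinements: the uniform-in-$i$ bound on the exact remainder $\Delta t^2 A\Sigma_i A$ justifies the $O(\Delta t^2)$ along the whole trajectory, and preconditioning on the right (the $A$-weighted metric, best taken on all of $\mathbb{R}^{n\times n}$ rather than only on symmetric matrices) gives $2(\Sigma_i A - I)$ without needing commutativity, whereas the paper's left-multiplied $A^{-1}\nabla L$ step uses $A\Sigma_i = \Sigma_i A$ without saying so.
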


\begin{proof}
From Lemmas~\ref{lem:autocorr_evolution} and \ref{lem:commute}, we have
\begin{equation*}
\Sigma_{i+1} = (I - \Delta t A)^2 \Sigma_i + 2\Delta t I.
\end{equation*}
Expanding to first-order in $\Delta t$:
\begin{align*}
\Sigma_{i+1} &= (I - 2\Delta t A + O\left(\Delta t^2 \right)) \Sigma_i + 2\Delta t I \\
&= \Sigma_i - 2\Delta t A \Sigma_i + 2\Delta t I + O\left(\Delta t^2 \right) \\
&= \Sigma_i - 2\Delta t (\Sigma_i A - I) + O\left(\Delta t^2 \right),
\end{align*}
where the last equality follows from the commutativity $A\Sigma_i = \Sigma_i A$, since $\Sigma_0 = \alpha I$ for $\alpha = 0$.
For the loss function $L(\Sigma) = \|\Sigma A - I\|_F^2$, the gradient is
\begin{equation*}
\nabla L = 2(\Sigma A - I)A.
\end{equation*}
The preconditioned gradient descent update with preconditioner $P$ is
\begin{equation*}
\Sigma_{i+1} = \Sigma_i - \eta P^{-1} \nabla L.
\end{equation*}
Setting $P = A$ and $\eta = \Delta t$ yields
\begin{align*}
\Sigma_{i+1} &= \Sigma_i - \Delta t A^{-1} \cdot 2(\Sigma_i A - I)A \\
&= \Sigma_i - 2\Delta t (\Sigma_i A - I),
\end{align*}
which coincides with our derived update rule up to first-order in $\Delta t$.
\end{proof}

\begin{figure}[!t]
\centering
\includegraphics[width=1.0\textwidth]{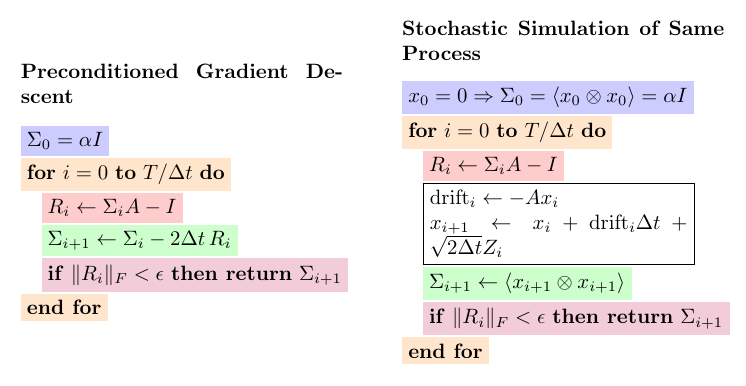}

\label{fig:true_equivalence}
\end{figure}

\begin{remark}
This theorem establishes an interesting connection between stochastic processes and optimization algorithms, and also opens pathways for potential algorithmic acceleration.
\end{remark}

\begin{remark}[Comparison with Newton-Schulz Algorithm]
Equation~\eqref{eq:grad_desc} bears a notable resemblance to the classical Newton-Schulz iteration \cite{schulz_iterative_1933, hotelling_analysis_1943}. While Newton-Schulz employs the update
\begin{equation*}\tilde{A}^{-1}_{i+1} = \tilde{A}^{-1}_i(2I - A\tilde{A}^{-1}_i) = 2\tilde{A}^{-1}_i - \tilde{A}^{-1}_i A \tilde{A}^{-1}_i,\end{equation*}
Theorem~\ref{theo:ou_gradient_descent} uses the simpler update
\begin{equation*}\tilde{A}^{-1}_{i+1} = \tilde{A}^{-1}_i - 2\Delta t \left(\tilde{A}^{-1}_i A - I\right).\end{equation*}
\end{remark}

The key advantages of the gradient descent formulation are: (1) \textit{Computational simplicity}: Each iteration requires only one matrix multiplication $\tilde{A}^{-1}_i A$ and one matrix subtraction, compared to Newton-Schulz's requirement for two matrix multiplications per iteration. (2) \textit{Direct optimization}: The algorithm explicitly minimizes the residual $\|R_i\|_F = \|\tilde{A}^{-1}_i A - I\|_F$, making the objective transparent. Nevertheless, the Newton-Schulz algorithm bears the advantage that it is not dependent on a learning rate, as is the case with the preconditioned gradient descent.

\subsection*{Historical Connection to the von Neumann-Ulam Method}

The convergence analysis of the iterative update~\eqref{eq:grad_desc} 
reveals a connection to the earliest Monte Carlo approach to matrix inversion, 
developed by von Neumann and Ulam 
and published by Forsythe and Leibler in 1950~\cite{forsythe_matrix_1950}. 
This historical parallel demonstrates the mathematical continuity 
underlying seemingly disparate computational approaches.

Forsythe and Leibler's method exploits the Neumann series expansion 
$B^{-1} = (I - A)^{-1} = \sum_{k=0}^{\infty} A^k$ where $A = I - B$, 
requiring the convergence condition 
$\max_r |1 - \lambda_r(B)| = \max_r |\lambda_r(A)| < 1$. 
Their ``solitaire game'' samples random walks through this infinite series, 
where each walk contributes to estimating individual matrix elements 
through Monte Carlo averaging.

Our gradient descent algorithm, while appearing different, 
operates on the same mathematical foundation. 
The error evolution $E_{i+1} = E_i(I - 2\Delta t A)$ 
implicitly computes the series $(I - 2\Delta t A)^i E_0$, 
which converges to zero when $\rho(I - 2\Delta t A) < 1$. 
This spectral radius condition $\rho(I - 2\Delta t A) < 1$ 
is mathematically equivalent to Forsythe and Leibler's requirement $\rho(A) < 1$ 
when $\Delta t$ is appropriately chosen.

The historical trajectory from 1950 to present 
shows an evolution of the same core mathematics:
\begin{enumerate}
\item \textbf{1950}: von Neumann and Ulam conceptualize matrix inversion 
as Monte Carlo sampling of the Neumann series
\item \textbf{2024}: Aifer \et\ connect SPD matrix inversion 
to thermodynamic equilibration via Ornstein-Uhlenbeck processes
\item \textbf{Present work}: We demonstrate that both stochastic approaches 
reduce to elementary gradient descent on the deterministic objective $\|XA - I\|_F^2$
\end{enumerate}

This progression represents a complete circle in computational thinking: 
from probabilistic sampling of the 1950s, 
through physical analogies of thermodynamic computing, 
to simple deterministic iteration that captures the essential mathematical content. 
The spectral radius serves as the unifying mathematical thread 
connecting these disparate computational paradigms, 
where the mathematics has remained constant 
while our understanding of how to exploit it has evolved.

\subsection*{Convergence Analysis}

The choice of learning rate $\Delta t$ in Theorem~\ref{theo:ou_gradient_descent} 
is critical for both convergence and computational efficiency.
{We now establish theoretical bounds for selecting appropriate step sizes, 
following classical convergence analysis
for gradient-based methods~\cite{boyd_convex_2004, nocedal_numerical_2006}.}

To analyze convergence, we examine the error evolution. 
Let $E_i = \tilde{A}^{-1}_i - A^{-1}$ denote the error at iteration $i$. 
Substituting $\tilde{A}^{-1}_i = E_i + A^{-1}$ into the update rule:

\begin{align*}
\tilde{A}^{-1}_{i+1} &= \tilde{A}^{-1}_i - 2\Delta t (\tilde{A}^{-1}_i A - I) \\
A^{-1} + E_{i+1} &= A^{-1} + E_i - 2\Delta t ((A^{-1} + E_i) A - I) \\
E_{i+1} &= E_i - 2\Delta t E_i A = E_i(I - 2\Delta t A).
\end{align*}

The error evolution is governed by the linear transformation $I - 2\Delta t A$, 
whose convergence properties are determined by 
spectral analysis~\cite{horn_matrix_2012, golub_matrix_2013}.

\begin{theorem}[Convergence Condition]
\label{thm:convergence_condition}
The iteration~\eqref{eq:grad_desc} converges to $A^{-1}$ 
if and only if the spectral radius $\rho(I - 2\Delta t A) < 1$~\cite{varga_matrix_2000}, 
which occurs when:
\begin{equation*}
0 < \Delta t < \frac{1}{\lambda_{\max}(A)},
\end{equation*}
where $\lambda_{\max}(A)$ is the largest eigenvalue of $A$.
\end{theorem}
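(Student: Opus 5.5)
We will argue directly from the error recursion derived just before the statement. With $E_i = \tilde{A}^{-1}_i - A^{-1}$ and $E_{i+1} = E_i(I - 2\Delta t A)$, induction gives $E_i = E_0 (I - 2\Delta t A)^i$. Write $M = I - 2\Delta t A$. Since $A$ is symmetric positive definite, we fix an orthogonal diagonalization $A = Q\Lambda Q^{\top}$ with eigenvalues $0<\lambda_{\min}=\lambda_1\le\dots\le\lambda_n=\lambda_{\max}$. Then $M = Q(I-2\Delta t\Lambda)Q^{\top}$ is symmetric, and its eigenvalues are $\mu_r = 1-2\Delta t\lambda_r$. This reduces everything to scalar recursions.

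\textbf{Sufficiency.} Suppose $\rho(M)<1$. Because $M$ is symmetric, $\|M^i\|_2=\rho(M)^i\to 0$. Hence
\begin{equation*}
\|E_i\|_F \le \|E_0\|_F\,\rho(M)^i \to 0,
\end{equation*}
so $\tilde{A}^{-1}_i\to A^{-1}$ for every initialization, and in particular for $\Sigma_0=0$, i.e.\ $E_0=-A^{-1}$.

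\textbf{Necessity.} Suppose instead that $|\mu_r|\ge 1$ for some $r$, and let $q_r$ be the corresponding eigenvector. Then $E_i q_r = \mu_r^i E_0 q_r$. For the paper's initialization $E_0=-A^{-1}$ this becomes $E_i q_r = -\mu_r^i\lambda_r^{-1}q_r$, which is nonzero and does not tend to zero. So convergence fails, and this holds even for the specific initialization, not merely for some worst-case $E_0$. This is the one step needing care: the claimed ``only if'' must hold for the actual starting point used, so we check that $E_0 q_r\neq 0$, which holds because $A^{-1}$ is invertible.

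\textbf{Translating to the interval.} The condition $\rho(M)<1$ means $-1<1-2\Delta t\lambda_r<1$ for all $r$. The right inequality is equivalent to $\Delta t\lambda_r>0$, i.e.\ $\Delta t>0$, since $\lambda_r>0$. The left inequality is equivalent to $\Delta t\lambda_r<1$ for all $r$, i.e.\ $\Delta t<1/\lambda_{\max}$, since the binding constraint comes from the largest eigenvalue. Together these give exactly $0<\Delta t<1/\lambda_{\max}(A)$. The only potential subtlety is remembering the lower endpoint: for $\Delta t\le 0$ we get $\mu_r\ge 1$, so the iteration fails to converge. We expect no real obstacle beyond this bookkeeping; the cited result of \cite{varga_matrix_2000} on powers of a matrix converging to zero is not even needed in full generality, since $M$ is symmetric.
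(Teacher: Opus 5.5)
Your proof is correct and follows the paper's route: find the eigenvalues $1-2\Delta t\lambda_j$ of $I-2\Delta t A$, then solve $|1-2\Delta t\lambda_j|<1$ using $\lambda_j>0$, with $\lambda_{\max}(A)$ giving the binding upper bound. The paper only cites the criterion ``converges iff $\rho<1$,'' whereas you prove it from $E_i=E_0(I-2\Delta t A)^i$ via $\|M^i\|_2=\rho(M)^i$ and check necessity for the actual starting error $E_0=-A^{-1}$. That last check is a worthwhile tightening, since the cited criterion concerns arbitrary initial errors and does not by itself give ``only if'' for one fixed initialization.
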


\begin{proof}
Let $\lambda_1, \lambda_2, \ldots, \lambda_n$ be the eigenvalues of $A$. 
The eigenvalues of $I - 2\Delta t A$ are $1 - 2\Delta t \lambda_j$ 
for $j = 1, \ldots, n$.

For convergence, we require $|1 - 2\Delta t \lambda_j| < 1$ 
for all $j$~\cite{saad_iterative_2003}:
\begin{align*}
-1 &< 1 - 2\Delta t \lambda_j < 1 \\
-2 &< -2\Delta t \lambda_j < 0 \\
0 &< \Delta t \lambda_j < 1.
\end{align*}
Since $A$ is positive definite, all $\lambda_j > 0$. 
The most restrictive condition is:
$$\Delta t < \frac{1}{\max_j \lambda_j} = \frac{1}{\lambda_{\max}(A)}.$$
\end{proof}

\begin{theorem}[Optimal Step Size]
\label{thm:optimal_step_size}
The learning rate that minimizes the spectral radius~\cite{polyak_introduction_1987} 
of $I - 2\Delta t A$ is:
\begin{equation*}
\Delta t^* = \frac{1}{\lambda_{\max}(A) + \lambda_{\min}(A)},
\end{equation*}
with a spectral radius of:
\begin{equation*}
\rho(I - 2\Delta t^* A) = \frac{\kappa(A) - 1}{\kappa(A) + 1},
\end{equation*}
where $\kappa(A) = \lambda_{\max}(A)/\lambda_{\min}(A)$ 
is the condition number~\cite{higham_accuracy_2002} of $A$ 
and $\rho(\cdot)$ is the spectral radius.
\end{theorem}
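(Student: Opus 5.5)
The plan is to reduce the problem to a scalar minimax over the spectrum of $A$, as in the proof of Theorem~\ref{thm:convergence_condition}. Since $A$ is symmetric positive definite, the eigenvalues of $I - 2\Delta t A$ are $1 - 2\Delta t\lambda_j$. Hence
\begin{equation*}
\rho(I - 2\Delta t A) = \max_j |1 - 2\Delta t \lambda_j| =: f(\Delta t).
\end{equation*}
The map $\lambda \mapsto |1 - 2\Delta t\lambda|$ is convex in $\lambda$, so its maximum over $[\lambda_{\min},\lambda_{\max}]$ is attained at an endpoint. This gives
\begin{equation*}
f(\Delta t) = \max\{|1 - 2\Delta t\lambda_{\min}|,\ |1 - 2\Delta t\lambda_{\max}|\},
\end{equation*}
so the problem becomes minimizing this two-term maximum over $\Delta t > 0$.

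Next I would show that the minimizer equalizes the two endpoint terms with opposite signs. For $\Delta t > 0$, the quantity $1 - 2\Delta t\lambda_{\min}$ is at least $1 - 2\Delta t\lambda_{\max}$. I would split into three cases.
\begin{itemize}
\item If $1 - 2\Delta t\lambda_{\max} \ge 0$, then $f(\Delta t) = 1 - 2\Delta t\lambda_{\min}$, which is strictly decreasing in $\Delta t$.
\item If $1 - 2\Delta t\lambda_{\min} \le 0$, then $f(\Delta t) = 2\Delta t\lambda_{\max} - 1$, which is strictly increasing.
\item In the intermediate regime, $f(\Delta t) = \max\{1 - 2\Delta t\lambda_{\min},\ 2\Delta t\lambda_{\max} - 1\}$. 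This is the maximum of a decreasing and an increasing linear function, so it is minimized where they cross.
\end{itemize}
The crossing condition is $1 - 2\Delta t\lambda_{\min} = 2\Delta t\lambda_{\max} - 1$, which gives $\Delta t^* = 1/(\lambda_{\max}+\lambda_{\min})$.

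Substituting back gives
\begin{equation*}
\rho = 1 - \frac{2\lambda_{\min}}{\lambda_{\max}+\lambda_{\min}} = \frac{\lambda_{\max}-\lambda_{\min}}{\lambda_{\max}+\lambda_{\min}}.
\end{equation*}
Dividing numerator and denominator by $\lambda_{\min}$ yields $(\kappa(A)-1)/(\kappa(A)+1)$. I would also check that $\Delta t^*$ satisfies the condition $\Delta t^* < 1/\lambda_{\max}$ from Theorem~\ref{thm:convergence_condition}, which is immediate since $\lambda_{\min} > 0$.

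The main obstacle is the reduction from all eigenvalues to the two extremes together with the piecewise monotonicity argument. This needs care to show that the crossing point is a global minimizer over $\Delta t > 0$ and not just a stationary point, since $f$ is not differentiable there. Convexity of $f$ in $\Delta t$, as a maximum of absolute values of affine functions, would give this cleanly. The degenerate case $\lambda_{\max} = \lambda_{\min}$ should be noted separately: there $\rho = 0$, consistent with $\kappa = 1$.
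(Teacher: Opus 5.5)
Your proposal is correct and follows the same route as the paper. Both reduce $\rho(I-2\Delta t A)$ to the two extreme eigenvalues $1-2\Delta t\lambda_{\max}$ and $1-2\Delta t\lambda_{\min}$, set them equal with opposite signs to obtain $\Delta t^* = 1/(\lambda_{\max}+\lambda_{\min})$, and substitute to get $(\kappa(A)-1)/(\kappa(A)+1)$. Your piecewise-monotonicity (or convexity) argument adds the justification, which the paper only asserts, that this crossing point is the global minimizer over $\Delta t>0$.
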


\begin{proof}
The eigenvalues of $I - 2\Delta t A$ lie in the interval:
$$[1 - 2\Delta t \lambda_{\max}, 1 - 2\Delta t \lambda_{\min}].$$

To minimize the spectral radius, 
we seek the step size where the extreme eigenvalues are symmetric around zero. 
This occurs when:
$$1 - 2\Delta t \lambda_{\max}(A) = -(1 - 2\Delta t \lambda_{\min}(A)).$$

This condition ensures that the two extreme eigenvalues have equal absolute values, 
minimizing $\rho = \max\{|1 - 2\Delta t \lambda_{\max}|, |1 - 2\Delta t \lambda_{\min}|\}$.

Solving for $\Delta t$:
$$1 - 2\Delta t \lambda_{\max}(A) = -1 + 2\Delta t \lambda_{\min}(A)$$,
$$\Delta t^* = \frac{1}{\lambda_{\max}(A) + \lambda_{\min}(A)}$$

The resulting spectral radius is:
$$\rho = |1 - 2\Delta t^* \lambda_{\max}(A)| 
= \left|1 - \frac{2\lambda_{\max}(A)}{\lambda_{\max}(A) + \lambda_{\min}(A)}\right| 
= \frac{\lambda_{\max}(A) - \lambda_{\min}(A)}{\lambda_{\max}(A) + \lambda_{\min}(A)} 
= \frac{\kappa(A) - 1}{\kappa(A) + 1}.$$
\end{proof}

\begin{remark}
\label{rem:rate}
The optimal convergence rate $(\kappa(A)-1)/(\kappa(A)+1)$ shows that 
well-conditioned matrices ($\kappa(A) \approx 1$) converge rapidly, 
while ill-conditioned matrices ($\kappa(A) \gg 1$) converge slowly, 
approaching the bound $\rho \to 1$. 
This behavior is characteristic of first-order methods 
applied to quadratic functions~\cite{nesterov_introductory_2004}.
\end{remark}

\subsection*{Numerical Experiments}\label{sec:numerical_experiments}

The experimental evaluation serves two primary purposes: 
validating the theoretical equivalence we establish 
and evaluating the practical performance of the resulting deterministic algorithm 
against established benchmarks.
We focus on showing that the simplified deterministic algorithm 
faithfully captures the thermodynamic convergence behavior
while maintaining competitive performance with established matrix inversion methods. 
We emphasize, however, that our primary
objective is to demonstrate the validity of the thermodynamic-to-deterministic mapping. 
We do not claim to displace highly-optimized numerical linear algebra libraries
(e.g., LAPACK) for general-purpose computing, 
but rather to show that the physics-inspired algorithm 
belongs to the same efficiency class as standard iterative methods.

To isolate the algorithmic performance from learning rate selection effects 
and provide a best-case analysis,
we employ the theoretically optimal learning rate 
$\Delta t^* = 1/(\lambda_{\max} + \lambda_{\min})$ throughout our experiments. 
While computing eigenvalues defeats the purpose of iterative methods in practice, 
this choice allows us to demonstrate the fundamental performance characteristics 
of the gradient descent formulation when operating under optimal conditions. 
This approach provides an upper bound on performance 
that practical adaptive learning rate schemes would aim to approximate. {We use 
the theoretically optimal learning rate as a controlled experimental condition 
to enable fair comparison between algorithms 
and to demonstrate achievable performance bounds, 
not as a practical implementation recommendation.}

We structure our experiments to address three key questions: 
(1) Does the deterministic formulation accurately reproduce 
the convergence behavior of the stochastic thermodynamic process? 
(2) How does the derived first-order algorithm compare 
to established second-order benchmarks like Newton-Schulz? 
(3) What role does learning rate selection play in practical performance? 
To provide context for these comparisons, 
Table~\ref{tab:applications} summarizes common applications requiring matrix inversion 
and their typical matrix properties. 
These applications motivate our focus on symmetric positive definite matrices 
and highlight the role of the condition numbers in these applications.

\begin{table*}
  \caption{\bl{Applications of Matrix Inversion and Expected Matrix Properties.
       We survey ten major application domains where matrix inversion is required.
       Here $p$ denotes dimensionality, $n$ denotes sample size, and $h$ denotes discretization mesh size.
       SPD indicates whether matrices are symmetric positive definite.
       Normal Eqs indicates whether the application requires forming normal equations
       to obtain an SPD matrix.}}
\label{tab:applications}
\resizebox{\textwidth}{!}{%
\begin{tabular}{llcccp{4.5cm}}
\toprule
\multicolumn{6}{c}{\textbf{Part 1: Matrix Properties and Context}} \\
\midrule
\textbf{Application} & \textbf{Matrix Type} & \textbf{SPD?} & \textbf{Normal Eqs?} & \textbf{Reference} & \textbf{Notes} \\
\midrule
   Bayesian inference & Precision matrices & Yes & No & \cite{leonard_bayesian_1992} & \\
   Linear regression & Gram matrix $(X^T X)$ & Yes & Yes & \cite{golub_matrix_2013} & When $X$ has full column rank \\
   Portfolio optimization & Asset \bl{covariance} matrices & Yes & No & \cite{markowitz_portfolio_1952} & \\
   Data assimilation & Observation error \bl{covariance} & Yes & Sometimes & \cite{daley_atmospheric_1991, tabeart_improving_2019} & \\
   Econometrics (GMM) & Weighting matrices & Yes & No & \cite{hansen_large_1982} & \\
   Mahalanobis distance & Covariance matrix & Yes & No & \cite{mahalanobis_generalised_1936} & \\
   Medical imaging & Measurement operators & No & Yes & \cite{cahill_preliminary_1970} & \\
   Computational physics & Discretized operators & Depends & Sometimes & \cite{franklin_matrix_2013} & SPD for elliptic PDEs; not for hyperbolic/parabolic \\
   Preconditioners & Approximation matrices & Usually & No & \cite{saad_iterative_2003} & \\
   Covariance estimation & Sample \bl{covariance} & Yes & No & \cite{ledoit_well-conditioned_2004} & When $n > p$ \\
\midrule
\multicolumn{6}{c}{\textbf{Part 2: Condition Number Scaling}} \\
\midrule
\textbf{Application} & \multicolumn{5}{c}{\textbf{Condition Number Scaling}} \\
\midrule
   Bayesian inference & \multicolumn{5}{l}{Depends on parameter correlations} \\
   Linear regression & \multicolumn{5}{l}{$\kappa \propto$ feature correlations (multicollinearity)} \\
   Portfolio optimization & \multicolumn{5}{l}{$\kappa$ increases as $p \to n$} \\
   Data assimilation & \multicolumn{5}{l}{$\kappa > 10^5$ due to interchannel correlations} \\
   Econometrics (GMM) & \multicolumn{5}{l}{$\kappa$ increases with moment conditions} \\
   Mahalanobis distance & \multicolumn{5}{l}{$\kappa$ increases as $p \to n$} \\
   Medical imaging & \multicolumn{5}{l}{$\kappa$ increases with undersampling} \\
   Computational physics & \multicolumn{5}{l}{$\kappa = O(h^{-2})$ for elliptic PDEs} \\
   Preconditioners & \multicolumn{5}{l}{Designed to be well-conditioned} \\
   Covariance estimation & \multicolumn{5}{l}{$\kappa$ increases as $p \to n$} \\
\bottomrule
\end{tabular}}
\end{table*}

To ensure controlled and reproducible testing conditions throughout all experiments, 
we generate test matrices with predetermined condition numbers 
using spectral decomposition. 
For each test configuration, 
we construct symmetric positive definite matrices as $A = QDQ^T$, 
where we obtain $Q$ as an orthogonal matrix 
via QR decomposition of random Gaussian matrices, 
and we design $D$ as a diagonal matrix with eigenvalues 
that yield the target condition number $\kappa$. 
We measure the approximation error of each method 
as the Frobenius norm of the residual matrix $\|XA - I\|_F$, 
where $X$ represents the computed approximate inverse 
and $I$ denotes the identity matrix. 
This metric quantifies how closely the computed solution satisfies $XA = I$.

All experiments were implemented in Python on a MacBook Pro 
with 8-GB memory running on an Apple M3 processor, 
using NumPy \cite{harris2020array} for numerical computations 
and SciPy \cite{2020SciPy-NMeth} for eigenvalue calculations.

{We first validate our deterministic formulation 
against stochastic thermodynamic simulation.}
Thermox \cite{duffield2024thermox} is a digital software package 
developed by Duffield and Donatella at Normal Computing,
the same research group behind the original thermodynamic linear algebra work 
of Aifer \et~\cite{aifer_thermodynamic_2024}.
Unlike traditional stochastic differential equation solvers 
that rely on time-discretization approximations (such as Euler-Maruyama),
Thermox utilizes analytical solutions for the mean and covariance 
of the Ornstein-Uhlenbeck process to generate samples without discretization error.
The authors position Thermox as a rigorous digital proxy 
for ideal thermodynamic hardware.

The comparison against Thermox \cite{duffield2024thermox} 
serves as a numerical validation of our theoretical derivation.
Thermox implements the stochastic simulation (Langevin integration) approach 
to thermodynamic computing described by Aifer \et~\cite{aifer_thermodynamic_2024}, 
making it the ideal platform for verifying 
that our deterministic algorithm faithfully reproduces 
the convergence behavior of the thermodynamic method. 
This comparison directly validates our central theoretical result:
that the linearized stochastic thermal dynamics are mathematically equivalent 
to deterministic gradient descent 
and can be simulated without the sampling overhead.
In particular, the iterative update~\eqref{eq:grad_desc} 
is mathematically equivalent to the stochastic thermodynamic algorithm 
from Aifer \et \cite{aifer_thermodynamic_2024} to first-order in the Euler-Maruyama discretization,
with the stochastic machinery removed. 
See Figure 5 for an algorithmic illustration.

We configure both our Gradient Descent (GD) algorithm and Thermox 
to use identical conditions for fair comparison.
We set the gradient descent learning rate 
to the theoretically-optimal value $\Delta t^* = 1/(\lambda_{\max} + \lambda_{\min})$ 
and apply the same step size to the thermodynamic sampling process. 
As noted earlier, we pre-compute these optimal rates 
to provide a best-case performance comparison 
rather than as a practical implementation strategy. 
We fix the matrix dimension at $n = 20$ and condition number at $\kappa = 20$. 
For the thermodynamic approach, 
we vary the number of samples from $10^3$ to $10^7$ 
and average results over 50 independent runs to account for stochastic variability. 
This design allows us to assess the computational cost 
inherent to stochastic simulation compared to the direct moment evolution.

\begin{figure}[!t]
\centering
\includegraphics[width=1.0\textwidth]{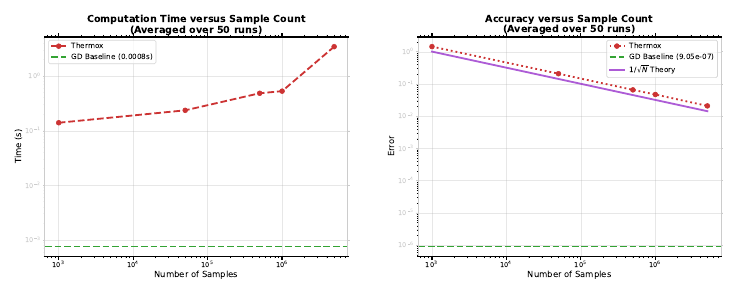}
\label{fig:samples}
\end{figure}

Figure 6 presents two plots 
that show how increasing sample counts 
affect the thermodynamic algorithm's computational performance.
The left plot shows computation time as a function of sample count, 
with the GD baseline time shown as a horizontal reference line.
The right plot shows approximation error versus number of samples, 
along with the theoretical Langevin dynamics simulation convergence rate 
of $1/\sqrt{N_s}$.
These plots illustrate that 5,000,000 samples, 
which require 3048ms to simulate, 
fail to achieve accuracy within four orders of magnitude of GD,
which executes in 0.8ms. 
This confirms the theoretical prediction of Theorem~\ref{theo:ou_gradient_descent}, 
showing that the stochastic sampling process 
is computationally redundant for the task of matrix inversion.

To illustrate the computational implications of this first-order equivalence, 
we fix the number of Thermox samples at 1,000,000 
and set the convergence tolerance for GD to $\epsilon = 10^{-2}$ 
to match the accuracy of the thermodynamic process as closely as possible. 
The rest of the experimental setup remains identical to the previous example. 
Table~\ref{tab:gd_vs_therm} presents timing and approximation error comparisons 
between Thermox and GD across matrix sizes $n \in \{10, 20, 50, 100\}$ 
and condition numbers $\kappa \in \{2, 50, 500\}$, 
yielding 12 distinct test configurations. 
In all 12 experiments, 
the deterministic variant achieves orders of magnitude performance improvements, 
confirming our theoretical predictions. 
We emphasize that this comparison serves primarily 
to validate Theorem~\ref{theo:ou_gradient_descent}, 
demonstrating that the deterministic moment evolution 
faithfully captures the thermodynamics without the sampling overhead, 
rather than to establish a performance benchmark. 
Simulating stochastic differential equations 
is inherently more expensive than deterministic iteration; 
the result confirms that the thermodynamic \textit{algorithm} 
is mathematically equivalent to the deterministic one, 
rendering the stochastic simulation redundant for this task.

\begin{table*}
  \caption{\bl{Gradient Descent versus Thermox Comparison with Optimal Learning Rate.
     GD consistently achieves lower error than Monte Carlo sampling (Thermox) for approximate matrix inversion,
     with speedups ranging from $54\times$ to $166{,}000\times$ across dimensions 10--100 and condition numbers 2--500.}}
  \label{tab:gd_vs_therm}
  \resizebox{\textwidth}{!}{%
  \begin{tabular}{ccccccccc}
  \toprule
   \textbf{Dim} &  \textbf{Cond} &  \begin{tabular}{@{}c@{}}\textbf{Iterations/} \\ \textbf{Samples (GD)}\end{tabular} &  \begin{tabular}{@{}c@{}}\textbf{Iterations/} \\ \textbf{Samples (Thermox)}\end{tabular} &  \begin{tabular}{@{}c@{}}\textbf{Time} \\ \textbf{GD (s)}\end{tabular} &  \begin{tabular}{@{}c@{}}\textbf{Time} \\ \textbf{Thermox (s)}\end{tabular} &  \begin{tabular}{@{}c@{}}\textbf{Error} \\ \textbf{GD}\end{tabular} &  \begin{tabular}{@{}c@{}}\textbf{Error} \\ \textbf{Thermox}\end{tabular} &  \begin{tabular}{@{}c@{}}\textbf{Speedup} \\ \textbf{($\times$)}\end{tabular} \\
  \midrule

   10 & 2 & 5 & 1000000 & 0.0000 & 7.6459 & 5.16e-03 & 1.69e-02 & 166161.77 \\
   10 & 50 & 151 & 1000000 & 0.0005 & 0.4757 & 9.76e-03 & 4.44e-02 & 913.12 \\
   10 & 500 & 2125 & 1000000 & 0.0071 & 0.3837 & 9.97e-03 & 9.44e-02 & 53.83 \\
  \midrule

   20 & 2 & 5 & 1000000 & 0.0000 & 6.5576 & 5.97e-03 & 3.17e-02 & 137522.63 \\
   20 & 50 & 151 & 1000000 & 0.0008 & 0.7071 & 9.76e-03 & 6.04e-02 & 941.49 \\
   20 & 500 & 2125 & 1000000 & 0.0105 & 0.5518 & 9.97e-03 & 1.49e-01 & 52.53 \\
  \midrule

   50 & 2 & 5 & 1000000 & 0.0004 & 7.9436 & 8.23e-03 & 7.50e-02 & 21876.46 \\
   50 & 50 & 151 & 1000000 & 0.0015 & 1.8516 & 9.76e-03 & 1.34e-01 & 1255.43 \\
   50 & 500 & 2125 & 1000000 & 0.0231 & 1.7854 & 9.97e-03 & 3.00e-01 & 77.35 \\
  \midrule

   100 & 2 & 6 & 1000000 & 0.0002 & 10.0047 & 3.44e-03 & 1.54e-01 & 54925.04 \\
   100 & 50 & 151 & 1000000 & 0.0037 & 4.8165 & 9.77e-03 & 2.55e-01 & 1288.87 \\
   100 & 500 & 2125 & 1000000 & 0.0722 & 4.3894 & 9.97e-03 & 4.51e-01 & 60.80 \\

  \bottomrule
  \end{tabular}}
\end{table*}

{Having validated the equivalence, 
we now evaluate practical performance against established deterministic methods.}
While Krylov subspace methods such as Preconditioned Conjugate Gradient (PCG) 
are often the gold standard for solving linear systems ($Ax=b$)
involving SPD matrices~\cite{saad_iterative_2003, golub_matrix_2013}, 
they are less commonly used for inverting a matrix $A \in \mathbb{R}^{n \times n}$,
as they would require $n$ independent solves. 
For explicit inversion, 
the most appropriate direct comparison 
is the Newton-Schulz (or Newton-Schulz) iteration~\cite{schulz_iterative_1933, hotelling_analysis_1943}, 
a standard Newton-based method.

From a numerical linear algebra perspective, 
our derived deterministic update~\eqref{eq:grad_desc} 
corresponds to a first-order preconditioned Richardson iteration~\cite{saad_iterative_2003}, 
whereas Newton-Schulz is a second-order Newton method. 
This comparison therefore tests 
whether the deterministic algorithm extracted from thermodynamic principles 
offers competitive performance against the classical ``digital'' standard 
for this specific task.

The experimental parameter space encompasses matrix dimensions 
$n \in \{8, 32, 128, 512, 1024\}$ and condition numbers $\kappa \in \{2, 50, 500\}$, 
yielding 15 distinct test configurations. 
Both algorithms employ a convergence tolerance $\epsilon = 10^{-3}$ 
with a maximum iteration limit of 10,000. 
For the gradient descent method, 
we utilize the theoretically optimal learning rate 
$\Delta t^* = \frac{1}{\lambda_{\max} + \lambda_{\min}}$, 
while the Newton-Schulz algorithm employs its standard initialization scheme.

Table~\ref{tab:detailed_results} presents the comparative results 
across all test configurations. 
The data reveal a clear performance dichotomy based on matrix conditioning. 
For well-conditioned matrices ($\kappa = 2$), 
GD demonstrates superior performance in terms of iteration count, 
and competitive performance in terms of computational time. 
However, as the condition number increases to $\kappa = 50$ and $\kappa = 500$, 
the Newton-Schulz method exhibits substantially better convergence characteristics, 
requiring significantly fewer iterations to achieve the same accuracy tolerance. 
This behavior aligns with the theoretical analysis in Remark~\ref{rem:rate}, 
which predicts that gradient descent performance degrades 
with increasing condition number 
while Newton-Schulz maintains more stable convergence properties 
for ill-conditioned systems.

\begin{table*}
  \caption{\bl{Detailed Comparison of Gradient Descent (GD) versus Schulz-Hotelling (SH) Iteration for Approximate Matrix Inversion.
       We compare convergence behavior across varying matrix dimensions (8--1024) and condition numbers (2--500) with optimal learning rates.
       SH demonstrates superior performance for ill-conditioned matrices, requiring fewer iterations and achieving lower error.
       For well-conditioned matrices, GD shows competitive performance at smaller dimensions but SH dominates at larger scales.
       Speedup represents the ratio of GD execution time to SH execution time.}}
  \label{tab:detailed_results}
  \resizebox{\textwidth}{!}{%
  \begin{tabular}{ccccccccc}
  \toprule
   \textbf{Dim} &  \textbf{Cond} &  \begin{tabular}{@{}c@{}}\textbf{Iterations} \\ \textbf{GD}\end{tabular} &  \begin{tabular}{@{}c@{}}\textbf{Iterations} \\ \textbf{SH}\end{tabular} &  \begin{tabular}{@{}c@{}}\textbf{Time} \\ \textbf{GD (s)}\end{tabular} &  \begin{tabular}{@{}c@{}}\textbf{Time} \\ \textbf{SH (s)}\end{tabular} &  \begin{tabular}{@{}c@{}}\textbf{Error} \\ \textbf{GD}\end{tabular} &  \begin{tabular}{@{}c@{}}\textbf{Error} \\ \textbf{SH}\end{tabular} &  \begin{tabular}{@{}c@{}}\textbf{Speedup} \\ \textbf{($\times$)}\end{tabular} \\
  \midrule

   8 & 2.0 & 7 & 7 & 0.0001 & 0.0004 & 5.53e-04 & 9.41e-04 & 4.00 \\
   8 & 50.0 & 208 & 16 & 0.0007 & 0.0001 & 9.98e-04 & 1.20e-04 & 0.14 \\
   \bl{8} & \bl{100.0} & \bl{456} & \bl{18} & \bl{0.0018} & \bl{0.0001} & \bl{9.90e-04} & \bl{1.12e-04} & \bl{0.04} \\
   8 & 500.0 & 2700 & 23 & 0.0084 & 0.0001 & 1.00e-03 & 8.09e-06 & 0.01 \\
  \midrule

   32 & 2.0 & 7 & 10 & 0.0002 & 0.0001 & 6.98e-04 & 1.14e-06 & 0.50 \\
   32 & 50.0 & 208 & 18 & 0.0010 & 0.0002 & 9.98e-04 & 7.56e-05 & 0.20 \\
   \bl{32} & \bl{100.0} & \bl{456} & \bl{20} & \bl{0.0026} & \bl{0.0003} & \bl{9.90e-04} & \bl{6.89e-05} & \bl{0.04} \\
   32 & 500.0 & 2700 & 25 & 0.0176 & 0.0003 & 1.00e-03 & 4.29e-06 & 0.02 \\
  \midrule

   128 & 2.0 & 8 & 12 & 0.0011 & 0.0010 & 3.79e-04 & 1.82e-06 & 0.91 \\
   128 & 50.0 & 208 & 20 & 0.0106 & 0.0014 & 9.98e-04 & 6.78e-05 & 0.13 \\
   \bl{128} & \bl{100.0} & \bl{456} & \bl{22} & \bl{0.0142} & \bl{0.0014} & \bl{9.90e-04} & \bl{6.16e-05} & \bl{0.05} \\
   128 & 500.0 & 2700 & 27 & 0.1018 & 0.0018 & 1.00e-03 & 3.69e-06 & 0.02 \\
  \midrule

   512 & 2.0 & 8 & 14 & 0.0199 & 0.0524 & 7.23e-04 & 3.43e-06 & 2.63 \\
   512 & 50.0 & 211 & 22 & 0.2519 & 0.0607 & 9.83e-04 & 6.68e-05 & 0.24 \\
   \bl{512} & \bl{100.0} & \bl{457} & \bl{24} & \bl{0.5794} & \bl{0.0753} & \bl{9.84e-04} & \bl{5.99e-05} & \bl{0.05} \\
   512 & 500.0 & 2700 & 29 & 3.1689 & 0.0756 & 1.00e-03 & 3.56e-06 & 0.02 \\
  \midrule

   1024 & 2.0 & 9 & 15 & 0.0936 & 0.3437 & 3.20e-04 & 4.79e-06 & 3.67 \\
   1024 & 50.0 & 215 & 23 & 2.0007 & 0.5270 & 9.98e-04 & 7.13e-05 & 0.26 \\
   \bl{1024} & \bl{100.0} & \bl{460} & \bl{25} & \bl{4.8963} & \bl{0.6140} & \bl{1.00e-03} & \bl{6.02e-05} & \bl{0.05} \\
   1024 & 500.0 & 2700 & 30 & 28.9243 & 0.6729 & 1.00e-03 & 3.54e-06 & 0.02 \\

  \bottomrule
  \end{tabular}}
\end{table*}

{The Schur complement provides a divide-and-conquer approach 
that can accelerate both methods for large matrices,
especially those with natural block structure.}
The recursive algorithm partitions matrices into blocks 
and applies the Schur complement relationship defined in Definition~\ref{schurDefinition}. 
Since the Schur complement of an SPD matrix remains SPD \cite[Section 7.7.5]{horn_matrix_2013},
and the block $D$ in Definition~\ref{schurDefinition} of an SPD matrix is also SPD, 
the recursive decomposition maintains favorable numerical properties 
throughout the computation.

The method's effectiveness comes from balancing 
the advantages of hierarchical decomposition with computational efficiency. 
By recursively splitting matrices until reaching a matrix size $\tau$,
chosen a priori, 
the algorithm exhibits divide-and-conquer benefits for large matrices 
while switching to the preconditioned gradient descent approach 
presented in the {\nameref{sec:gradient_descent} section}
for the inner approximate inversion calls. 
This hybrid strategy takes advantage of the guaranteed positive eigenvalues 
and typically well-conditioned nature of small SPD matrices. 
The complete accelerated matrix inversion algorithm 
is presented in Algorithm~\ref{alg:recursive_schur_inversion}.

\begin{algorithm}
\caption{Recursive Schur Complement Matrix Inversion}
\label{alg:recursive_schur_inversion}
\begin{algorithmic}
\REQUIRE Matrix $A \in \mathbb{R}^{n \times n}$, minimum size to split $\tau$, GD parameters $\Delta t$, $K$, $\epsilon$
\ENSURE Approximate inverse $A^{-1}$
\STATE \textsc{SchurInvert}($A$, $\tau$, $\Delta t$, $K$, $\epsilon$):
\STATE \hspace{\algorithmicindent} \textbf{if} $\text{size}(A) \leq \tau$ \textbf{then}
\STATE \hspace{\algorithmicindent} \hspace{\algorithmicindent} \textbf{return} \textsc{GDApproximateInverse}($A$, $\Delta t$, $K$, $\epsilon$)
\STATE \hspace{\algorithmicindent} \textbf{else}
\STATE \hspace{\algorithmicindent} \hspace{\algorithmicindent} Split $A$ into blocks: $A = \begin{bmatrix} A_{11} & A_{12} \\ A_{21} & A_{22} \end{bmatrix}$
\STATE \hspace{\algorithmicindent} \hspace{\algorithmicindent} $A_{22}^{-1} \leftarrow$ \textsc{SchurInvert}($A_{22}$, $\tau$, $\Delta t$, $K$, $\epsilon$)
\STATE \hspace{\algorithmicindent} \hspace{\algorithmicindent} $S \leftarrow A_{11} - A_{12} A_{22}^{-1} A_{21}$
\STATE \hspace{\algorithmicindent} \hspace{\algorithmicindent} $S^{-1} \leftarrow$ \textsc{SchurInvert}($S$, $\tau$, $\Delta t$, $K$, $\epsilon$)
\STATE \hspace{\algorithmicindent} \hspace{\algorithmicindent} $B_{11} \leftarrow S^{-1}$
\STATE \hspace{\algorithmicindent} \hspace{\algorithmicindent} $B_{12} \leftarrow -S^{-1} A_{12} A_{22}^{-1}$
\STATE \hspace{\algorithmicindent} \hspace{\algorithmicindent} $B_{21} \leftarrow -A_{22}^{-1} A_{21} S^{-1}$
\STATE \hspace{\algorithmicindent} \hspace{\algorithmicindent} $B_{22} \leftarrow A_{22}^{-1} + A_{22}^{-1} A_{21} S^{-1} A_{12} A_{22}^{-1}$
\STATE \hspace{\algorithmicindent} \hspace{\algorithmicindent} \textbf{return} $\begin{bmatrix} B_{11} & B_{12} \\ B_{21} & B_{22} \end{bmatrix}$
\STATE \hspace{\algorithmicindent} \textbf{end if}
\end{algorithmic}
\end{algorithm}

We examine the performance of GD and Newton-Schulz 
when accelerated by the Schur complement technique 
presented in Algorithm~\ref{alg:recursive_schur_inversion}. 
The Schur complement acceleration serves a dual purpose: 
partitioning matrices into smaller blocks 
reduces the computational burden of individual iterations, 
while the resulting submatrices typically exhibit better conditioning, 
thereby reducing the iteration count required for convergence 
in the base case solver.

We analyze both algorithms when accelerated by the Schur complement technique, 
applied to matrices with condition number $\kappa = 100$. 
The acceleration method recursively partitions matrices 
by splitting them in half along both dimensions, 
computing the inverse through the Schur complement formula 
until reaching base cases of specified minimum dimensions. 
The experimental design considers minimum Schur complement dimensions 
of $\tau = \{4, 8, 16, 32\}$ across matrix sizes from 64 to 2048, 
allowing us to examine how the choice of recursion cutoff 
affects overall performance.

For gradient descent operations, 
we pre-compute the theoretical optimal learning rates 
$\Delta t^* = 1/(\lambda_{\max} + \lambda_{\min})$ 
using eigenvalue analysis before timing begins. 
While this eigenvalue computation is not practical in real applications 
where iterative methods are preferred precisely to avoid such calculations, 
we employ this approach to ensure that gradient descent operates under optimal conditions 
and to eliminate learning rate selection 
as a confounding variable in the performance comparison.

Table~\ref{tab:block_results} presents the comparative results 
across all test configurations. 
The data reveal that gradient descent becomes increasingly competitive 
when accelerated by the Schur complement technique, 
benefiting from the improved conditioning of smaller subproblems. 
While Newton-Schulz maintains superior performance 
for the smallest matrices ($64 \times 64$), 
gradient descent achieves consistent speedups for larger matrix configurations, 
ranging from modest improvements (1.01--1.18) for the largest matrices 
to more substantial gains (up to 1.44) for intermediate sizes. 
Notably, gradient descent outperforms Newton-Schulz 
in all configurations for ($512 \times 512$) and ($1024 \times 1024$) matrices, 
indicating that the Schur complement acceleration 
appears to mitigate gradient descent's sensitivity to poor conditioning 
by operating on better-conditioned subproblems.

\begin{table*}
  \caption{\bl{Schur Complement Approximate Matrix Inversion Performance Comparison.
       We evaluate Gradient Descent (GD) versus Schulz-Hotelling (SH) iteration for approximately inverting block-structured matrices using the Schur complement acceleration.
       All matrices have condition number 100, with dimensions ranging from 64 to 2048 and varying minimum Schur complement block sizes.
       As matrix dimension increases, the performance gap narrows, and both algorithms remain competititve.
       Speedup represents the ratio of GD execution time to SH execution time.}}
  \label{tab:block_results}
  \resizebox{\textwidth}{!}{%
  \begin{tabular}{ccccccccc}
  \toprule
   \textbf{Dim} &  \begin{tabular}{@{}c@{}}\textbf{Min Schur} \\ \textbf{Block Size}\end{tabular} &  \begin{tabular}{@{}c@{}}\textbf{Iterations} \\ \textbf{GD}\end{tabular} &  \begin{tabular}{@{}c@{}}\textbf{Iterations} \\ \textbf{SH}\end{tabular} &  \begin{tabular}{@{}c@{}}\textbf{Time} \\ \textbf{GD (s)}\end{tabular} &  \begin{tabular}{@{}c@{}}\textbf{Time} \\ \textbf{SH (s)}\end{tabular} &  \begin{tabular}{@{}c@{}}\textbf{Error} \\ \textbf{GD}\end{tabular} &  \begin{tabular}{@{}c@{}}\textbf{Error} \\ \textbf{SH}\end{tabular} &  \begin{tabular}{@{}c@{}}\textbf{Speedup} \\ \textbf{($\times$)}\end{tabular} \\
  \midrule

   64 & 4 & 192 & 103 & 0.0013 & 0.0012 & 1.38e-02 & 3.27e-03 & 0.92 \\
   64 & 8 & 161 & 68 & 0.0007 & 0.0006 & 1.06e-02 & 7.87e-03 & 0.86 \\
   64 & 16 & 188 & 44 & 0.0009 & 0.0004 & 9.98e-03 & 6.56e-03 & 0.44 \\
   64 & 32 & 285 & 29 & 0.0016 & 0.0003 & 5.48e-03 & 1.64e-03 & 0.19 \\
  \midrule

   128 & 4 & 256 & 190 & 0.0015 & 0.0017 & 1.65e-02 & 1.29e-02 & 1.13 \\
   128 & 8 & 167 & 120 & 0.0011 & 0.0011 & 1.20e-02 & 2.88e-03 & 1.00 \\
   128 & 16 & 139 & 76 & 0.0008 & 0.0009 & 9.35e-03 & 1.62e-03 & 1.13 \\
   128 & 32 & 194 & 48 & 0.0014 & 0.0006 & 6.71e-03 & 5.03e-03 & 0.43 \\
  \midrule

   512 & 4 & 817 & 681 & 0.0076 & 0.0083 & 2.81e-02 & 2.37e-02 & 1.09 \\
   512 & 8 & 486 & 450 & 0.0050 & 0.0072 & 1.97e-02 & 2.67e-03 & 1.44 \\
   512 & 16 & 298 & 261 & 0.0041 & 0.0050 & 1.54e-02 & 7.13e-03 & 1.22 \\
   512 & 32 & 203 & 156 & 0.0038 & 0.0042 & 1.15e-02 & 4.67e-03 & 1.11 \\
  \midrule

   1024 & 4 & 1565 & 1304 & 0.0251 & 0.0274 & 3.52e-02 & 2.98e-02 & 1.09 \\
   1024 & 8 & 873 & 887 & 0.0267 & 0.0311 & 2.81e-02 & 1.49e-02 & 1.16 \\
   1024 & 16 & 525 & 517 & 0.0186 & 0.0220 & 1.91e-02 & 3.12e-03 & 1.18 \\
   1024 & 32 & 309 & 299 & 0.0172 & 0.0199 & 1.49e-02 & 4.82e-03 & 1.16 \\
  \midrule

   2048 & 4 & 2907 & 2577 & 0.1279 & 0.1287 & 4.78e-02 & 3.18e-02 & 1.01 \\
   2048 & 8 & 1601 & 1677 & 0.1180 & 0.1212 & 3.52e-02 & 4.29e-02 & 1.03 \\
   2048 & 16 & 916 & 1024 & 0.1186 & 0.1259 & 2.55e-02 & 3.24e-03 & 1.06 \\
   2048 & 32 & 524 & 581 & 0.1514 & 0.1253 & 2.02e-02 & 4.15e-03 & 0.83 \\

  \bottomrule
  \end{tabular}}
\end{table*}

\begin{remark}
The performance results presented use pre-computed optimal learning rates 
to demonstrate the algorithm's capabilities under ideal conditions. 
In practical applications, adaptive learning rate strategies would be employed, 
potentially achieving 70--90\% of this optimal performance 
without eigenvalue computation.
\end{remark}

{Finally, we validate the theoretical optimal learning rate 
through sensitivity analysis.}
We systematically evaluated learning rates 
within the range $[0.1\Delta t^*, 2.0\Delta t^*]$ 
using 50 equally spaced sampling points. 
Figure 7 presents the results 
for matrices with varying condition numbers, 
illustrating both the number of iterations required for convergence 
(convergence criterion: $\epsilon = 10^{-6}$) as a function of the learning rate 
(left panel) 
and the corresponding spectral radius as a function of the learning rate 
(right panel). 
These experimental results corroborate the theoretical predictions 
established in Theorems~\ref{thm:convergence_condition} and~\ref{thm:optimal_step_size}.

\begin{figure}[!t]
\centering
\includegraphics[width=1.0\textwidth]{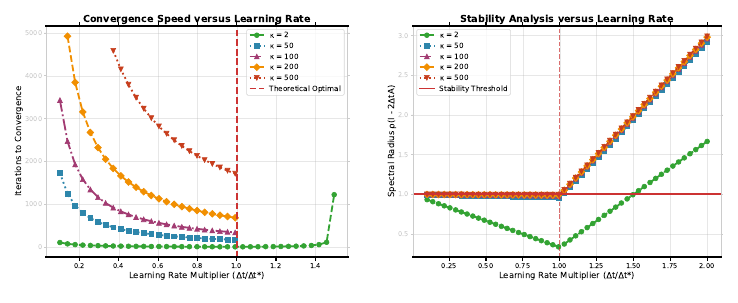}
\label{fig:spectral}
\end{figure}

\section*{Discussion}

{The first-order equivalence between thermodynamic SPD matrix inversion 
and preconditioned gradient descent 
situates our results within the broader landscape of analog computing, 
where physical processes have long served as computational substrates.}
Analog computing represents a computational paradigm 
where physical processes directly embody 
the mathematical operations being performed.
The original analog computers were mechanical systems 
that solved differential equations 
by using the characteristics of physical processes, 
such as spring mechanics, fluid dynamics, or electrical circuits, 
to create \textit{analogues} of other dynamical systems~\cite{owens1986vannevar}.
In this approach, the time evolution of the physical system directly mimics 
the behavior described by the differential equations 
that would otherwise require numerical solution.

The connection between analog computing and optimization 
extends naturally to oscillatory neural networks,
where coupled oscillators evolve according to dynamical equations 
that minimize energy functions through
their collective behavior~\cite{todri_sanial_computing_2024}. 
These systems exploit the natural tendency
of physical oscillators to synchronize and settle into stable configurations, 
effectively performing optimization through their intrinsic dynamics. 
Thermodynamic computing, as demonstrated by Aifer et al.~\cite{aifer_thermodynamic_2024},
represents a specific instance of this broader paradigm 
where thermal fluctuations drive the system 
toward statistical equilibrium states that encode computational solutions.

Our analysis reveals a direct mathematical connection 
between these analog processes and classical optimization algorithms. 
Consider the differential equation that
governs the evolution of our approximate matrix inverse:
\begin{equation*}
    \frac{\mathrm{d}\tilde{A}^{-1}}{\mathrm{d}t} = -2(\tilde{A}^{-1}A - I).
\end{equation*}

This ordinary differential equation represents the deterministic core 
of the thermodynamic matrix inversion process, 
stripped of its stochastic thermal components.
When discretized using forward Euler integration, 
these continuous-time dynamics yield:
\begin{alignat}{2}
                    & &  \frac{\tilde{A}^{-1}_{i+1} - \tilde{A}^{-1}_i}{\Delta t} & = -2(\tilde{A}^{-1}_iA - I) \\
   \Rightarrow\quad & & \tilde{A}^{-1}_{i+1} & = \tilde{A}^{-1}_i -2\Delta t(\tilde{A}^{-1}_iA - I)+ O\left(\Delta t^2 \right),
\end{alignat}
which is precisely the gradient descent rule derived in Theorem~\ref{theo:ou_gradient_descent} to first-order in $\Delta t$ when quadratic terms are neglected.

This connection illuminates the relationship 
between analog and digital implementations of the same mathematical process. 
While Aifer et al.~\cite{aifer_thermodynamic_2024}
demonstrated SPD matrix inversion through the long-term statistical behavior 
of the stochastic Ornstein-Uhlenbeck process, 
our analysis shows that the essential computational content can be captured 
by a much simpler deterministic ordinary differential equation. 
The thermal noise that drives equilibration in the physical system 
corresponds to the iterative updates that drive
convergence in the optimization algorithm.

For the specific case of thermodynamic matrix inversion, 
the analysis presented in the {\nameref{sec:gradient_descent}} section suggests
that the sophisticated stochastic machinery may not be computationally essential; 
the same mathematical result can be achieved
through straightforward iterative optimization. 
Whether similar optimization equivalences exist for other analog computing
approaches remains an open question that could benefit from theoretical investigation.
The connection we establish provides one example 
of how physical computing paradigms can be understood through
classical optimization theory, 
potentially offering insights for both theoretical analysis 
and practical implementation of analog computing systems.

This mathematical equivalence has concrete implications 
for the design and understanding of thermodynamic computing systems.
The most striking implication of Theorem~\ref{theo:ou_gradient_descent} 
is that the thermal fluctuations central to thermodynamic computing 
are not algorithmically essential for SPD matrix inversion. 
The complex stochastic dynamics of the Ornstein-Uhlenbeck process 
reduce to a deterministic iteration that captures the same mathematical content.

We note, however, that this redundancy is specific to problems 
that involve a convex quadratic potential with a single global minimum, 
of which matrix inversion is one. 
In broader thermodynamic computing applications 
involving non-convex energy landscapes 
(e.g., combinatorial optimization or sampling from multi-modal distributions), 
thermal fluctuations could remain algorithmically essential 
to escape local minima and cross energy barriers.
Our result clarifies that for the subclass of convex problems such as linear algebra, 
the \textit{annealing} property of the noise is not required,
allowing the process to be efficiently collapsed into deterministic moment evolution.

Despite this approximate algorithmic equivalence, 
physical thermodynamic systems may still offer advantages in specific contexts.
For specific hardware constraints and problem sizes, 
analog thermodynamic circuits may achieve better energy efficiency 
than digital processors performing equivalent gradient descent iterations. 
Additionally, when additional physical constraints 
are naturally incorporated into the thermodynamic system, 
such as positivity constraints through circuit design, 
the physical implementation may handle constrained matrix inversion more naturally 
than digital methods that require explicit constraint handling.

These observations provide practical guidance for hardware design.
The time constant of the thermodynamic system 
should be chosen to match the optimal learning rate 
$\Delta t^* = 1/(\lambda_{\max} + \lambda_{\min})$ for faster convergence.
The physical implementation should ensure 
that the effective preconditioner matches the target matrix $A$, 
suggesting design principles for coupling strengths and thermal reservoir properties.
Since the physical system implements gradient descent, 
standard optimization convergence criteria can be applied 
to determine when thermal equilibration is sufficient, 
replacing ad-hoc equilibration time estimates with principled convergence monitoring.

In summary, this work has established a theoretical connection 
between thermodynamic computing and classical optimization 
by demonstrating that, to first-order in the discretized dynamics, thermodynamic matrix inversion on SPD matrices
is mathematically equivalent to preconditioned gradient descent. 
This equivalence reveals that the moment evolution 
of the complex stochastic dynamics underlying thermodynamic computing 
can be understood through the framework of deterministic optimization. 
Consequently, the stochastic thermal fluctuations 
that are central to the analog approach 
are shown to be algorithmically inessential 
for the specific convex problem of SPD matrix inversion, 
though they remain fundamental 
for thermodynamic applications involving non-convex energy landscapes.

The theoretical connection we establish 
contributes to the broader understanding 
of how physical processes relate to computational algorithms. 
By revealing the optimization structure underlying thermodynamic computing, 
our work demonstrates how theoretical analysis can illuminate 
the mathematical foundations of unconventional computing approaches. 
This finding has significant implications for both computational paradigms: 
it provides theoretical grounding for thermodynamic computing approaches 
while simultaneously offering new perspectives 
on how physics-inspired algorithms relate 
to standard numerical linear algebra methods.

%
%
\section*{\bl{Methods}}
\subsection*{Mathematical Preliminaries}\label{sec:math_prelims}

This section presents the mathematical tools necessary for our analysis. 
This includes important concepts from linear algebra, stochastic processes, and discrete probability 
representations that are needed to support our main theoretical results.

\paragraph{}

We begin with important matrix operations that are central to our analysis. 
Given two matrices $A\in \mathbb{R}^{n\times m}$ and $B \in \mathbb{R}^{p\times q}$, we let $A\otimes B$ denote the $pn\times qm$ Kronecker product defined by 
\begin{equation}    
A\otimes B = \begin{bmatrix}
  a_{11} B & \cdots & a_{1m} B \\
             \vdots & \ddots &           \vdots \\
  a_{n1} B & \cdots & a_{nm} B
\end{bmatrix}.
\end{equation}

For a matrix $A\in \mathbb{R}^{n\times m}$, we let $A^T\in \mathbb{R}^{m\times n}$ denote its transpose. When $A \in \mathbb{R}^{n\times n}$ is square, 
we define the matrix exponential by 
\begin{equation}
e^A = \sum_{k=0}^{\infty} \frac{A^k}{k!},
\end{equation}
which satisfies $\frac{d}{dt} e^{tA} = A e^{tA}$ for any scalar $t$.

\begin{definition}[Schur Complement]
\label{schurDefinition}
Let $M = \begin{pmatrix} A & B \\ C & D \end{pmatrix}$ be a block 
matrix where $A \in \mathbb{R}^{n \times n}$ and $D \in \mathbb{R}^{m \times m}$. If $A$ is invertible, 
the \textbf{Schur complement} of $A$ in $M$ is defined as
\begin{equation}
S = D - CA^{-1}B.
\end{equation}
When both $A$ and $S$ are invertible, the inverse of $M$ is given by
\begin{equation}
M^{-1} = \begin{pmatrix} 
A^{-1} + A^{-1}BS^{-1}CA^{-1} & -A^{-1}BS^{-1} \\ 
-S^{-1}CA^{-1} & S^{-1}
\end{pmatrix}.
\end{equation}
\end{definition}

\paragraph{}

Our connection between thermodynamic systems and optimization algorithms relies on the mathematical properties 
of stochastic processes, particularly the Ornstein-Uhlenbeck process \cite{uhlenbeck_theory_1930}.

\begin{definition}[Ornstein-Uhlenbeck Process]
Let $A, D \in \mathbb{R}^{n \times n}$ and $W_t$ be a standard Wiener process for $t \in [0, \infty)$ on a filtered probability space. 
The Ornstein-Uhlenbeck process $X_t \in \mathbb{R}^n$ is defined by the stochastic differential equation:
\begin{equation}
    dX_t = -A X_t dt + D \, dW_t.
\end{equation}
\end{definition}

\begin{definition}[Discretized Ornstein-Uhlenbeck Process]
\label{def:discrete_ou}
{For the specific case of thermodynamic matrix inversion, the system is designed such that the diffusion 
matrix is isotropic, and hence $D = \sqrt{2}I_n$~\cite{aifer_thermodynamic_2024}.} The Euler-Maruyama discretization~\cite{kloeden_numerical_1992} of this 
process with time step $\Delta t$ is given by:
\begin{equation}
\label{e.ou_discret}
x_{i+1} = (I - \Delta t A) x_i + \sqrt{2\Delta t} Z_i,
\end{equation}
where $x_i \in \mathbb{R}^n$ approximates $X_{i\Delta t}$, and $Z_i \sim \mathcal{N}(0, I_n)$ are independent Gaussian random vectors.
\end{definition}

\begin{definition}[Autocorrelation Matrix]
\label{def:autocorr}
For the continuous Ornstein-Uhlenbeck process $X_t$ and its discrete approximation $x_i$, the autocorrelation matrices are defined as:
\begin{equation}
\label{e.ou_sde}
\Sigma_t = \mathbb{E}[X_t \otimes X_t] = \mathbb{E}[X_t X_t^\top] \quad \text{(continuous)},
\end{equation}
\begin{equation}
\Sigma_i = \mathbb{E}[x_i \otimes x_i] = \mathbb{E}[x_i x_i^\top] \quad \text{(discrete)}.
\end{equation}
We use the bracket notation $\langle \cdot \rangle$ as shorthand for the expectation operator $\mathbb{E}[\cdot]$.
\end{definition}

\begin{proposition}[Properties of the Ornstein-Uhlenbeck Process]
\label{prop:ou_properties}
The Ornstein-Uhlenbeck process satisfies:
\begin{itemize}
    \item Explicit solution: 
    \begin{equation}
    X_t = e^{-At}X_0 + \int_0^t e^{-A(t-s)}D \, dW_s.
    \end{equation}
    
    \item Mean evolution: 
    \begin{equation}
        \mathbb{E}[X_t] = e^{-At} \mathbb{E}[X_0].
    \end{equation}
    
    \item Autocorrelation matrix (when $\mathbb{E}[X_0] = 0$): 
    \begin{equation}\label{e.autocorrelation_matrix}
        \Sigma_t = \langle X_t \otimes X_t \rangle = \int_0^t e^{-A(t-s)}DD^\top e^{-A^\top(t-s)} \, ds.
    \end{equation}
\end{itemize}
\end{proposition}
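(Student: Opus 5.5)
The plan is to first obtain the explicit solution by an integrating-factor argument, and then read off the mean and the second moment from standard properties of the Itô integral. For the explicit solution, I will apply the multidimensional Itô formula to the process $Y_t = e^{At}X_t$. Since $f(t,x) = e^{At}x$ is linear in $x$, its second derivative in $x$ vanishes and there is no Itô correction term. Using $\frac{d}{dt}e^{tA} = Ae^{tA}$ from the preliminaries and the defining SDE $dX_t = -AX_t\,dt + D\,dW_t$, this gives
\begin{equation*}
dY_t = Ae^{At}X_t\,dt + e^{At}\,dX_t = e^{At}D\,dW_t .
\end{equation*}
Integrating from $0$ to $t$ yields $e^{At}X_t = X_0 + \int_0^t e^{As}D\,dW_s$. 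Multiplying on the left by $e^{-At}$, and using that $e^{-At}e^{As} = e^{-A(t-s)}$ because powers of the same matrix commute, gives the stated formula. Existence of the integral is immediate, since the integrand is deterministic and continuous, hence square-integrable on $[0,t]$.

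For the mean, I will take expectations in the explicit solution. The stochastic integral of a deterministic square-integrable integrand is a martingale started at $0$, so its expectation vanishes. This leaves $\mathbb{E}[X_t] = e^{-At}\mathbb{E}[X_0]$.

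For the autocorrelation, I will write $X_t = e^{-At}X_0 + M_t$ with $M_t = \int_0^t e^{-A(t-s)}D\,dW_s$ and expand $\mathbb{E}[X_tX_t^\top]$ into four terms.
\begin{enumerate}
\item The cross terms $e^{-At}\mathbb{E}[X_0M_t^\top]$ and its transpose vanish, because $X_0$ is $\mathcal{F}_0$-measurable and hence independent of the Wiener increments, while $\mathbb{E}[M_t]=0$.
\item The term $\mathbb{E}[M_tM_t^\top]$ is computed with the multidimensional Itô isometry, $\mathbb{E}\bigl[(\int_0^t G_s\,dW_s)(\int_0^t G_s\,dW_s)^\top\bigr] = \int_0^t G_sG_s^\top\,ds$, with $G_s = e^{-A(t-s)}D$. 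Since $(e^{-A(t-s)})^\top = e^{-A^\top(t-s)}$, which follows termwise from the series definition, this produces exactly the integral in \eqref{e.autocorrelation_matrix}.
\item The remaining term is $e^{-At}\mathbb{E}[X_0X_0^\top]e^{-A^\top t}$.
\end{enumerate}

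The main obstacle is this last term: the hypothesis $\mathbb{E}[X_0]=0$ alone does not remove it. I will therefore make the intended setting explicit. In the paper's usage the process starts from $X_0 = \mathbf{0}$, as in Theorem~\ref{theo:ou_gradient_descent}, or more generally from an $X_0$ with vanishing second moment, and then the term is zero. Otherwise I will state the general formula with this extra summand. The rest of the argument is routine once the Itô isometry and the independence of $X_0$ from $W$ are invoked.
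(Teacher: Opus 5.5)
Your argument is correct. It is the standard textbook derivation: an integrating factor via It\^o's formula, zero mean of a Wiener integral with deterministic integrand, and the multidimensional It\^o isometry. The paper gives no proof of this proposition and treats it as background, so there is no competing route to compare against.

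Your caveat on the third item is right, and it is the one substantive point here. With only $\mathbb{E}[X_0]=0$, the stated formula omits $e^{-At}\mathbb{E}[X_0X_0^\top]e^{-A^\top t}$. This term is nonzero whenever $\mathbb{E}[X_0X_0^\top]\neq 0$, since $e^{-At}$ is invertible. So as written, the item fails for any random mean-zero initial condition with nonzero second moment. It is correct only if $X_0$ is read as deterministic, in which case $\mathbb{E}[X_0]=0$ means $X_0=0$. The paper only ever uses the formula with $X_0=0$: the proof of Theorem~\ref{theorem:ou_ami} explicitly notes that it ``holds for $X_0=0$''.

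Three small fixes:
\begin{enumerate}
\item Your ``more generally, an $X_0$ with vanishing second moment'' is not more general. Since $\mathrm{tr}\,\mathbb{E}[X_0X_0^\top]=\mathbb{E}\|X_0\|^2$, it forces $X_0=0$ almost surely.
\item The continuous-time result that uses $X_0=0$ is Theorem~\ref{theorem:ou_ami}. Theorem~\ref{theo:ou_gradient_descent} concerns the discrete chain started at $x_0=\mathbf{0}$.
\item You should state explicitly that $X_0\in L^2$, so that all the moments exist. You should also state that $W$ is an $(\mathcal{F}_t)$-Wiener process, meaning its increments are independent of the past. That is what makes $\mathcal{F}_0$, and hence $X_0$, independent of $W$ in your cross-term argument; $\mathcal{F}_0$-measurability alone does not give it.
\end{enumerate}
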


{The \nameref{sec:ou_process} section} uses these properties to establish the connection between the steady-state behavior of thermodynamic systems 
and approximate matrix inversion.

\paragraph{}
\label{sec:discrete_reps}
{
In order to digitally simulate the thermal noise inherent to thermodynamic systems, an efficient, 
discrete representation of continuous probability distributions is required. 
A broad class of such approximations represents a distribution as a weighted sum of Dirac deltas \cite{tsoutsouras_laplace_2022}:
}
\begin{equation}
\mu_{N} = \sum_{i=1}^{N} p_i \delta_{x_i},
\end{equation}

{
where $N$ is the representation size, $p_i \geq 0$ are probability masses 
with $\sum_{i=1}^N p_i = 1$, and $x_i \in \mathbb{R}$ are the support points. 
The positions and weights of such a representation depend on the quantization 
algorithm applied to the continuous distribution.
}

{A discrete representation of the noise term in Equation~\eqref{e.ou_discret} then allows for the simulation of the Ornstein-Uhlenbeck process via performing arithmetic operations directly on the probability distributions, allowing one to efficiently compute the autocorrelation matrix required for matrix inversion. A more detailed explanation on how the distributional arithmetic is performed can be found in} {the \nameref{sec:digital_implementation_via_distributional_arithmetic} section}.

{One example of such a discretization scheme is the Telescopic Torques Representation (TTR) \cite{tsoutsouras_laplace_2022}, a recursive divide-and-conquer method for constructing discrete approximations of continuous probability distributions~\cite{tsoutsouras_laplace_2022}. The algorithm is based on the principle that the mean value of a distribution provides an optimal splitting point that balances the ``probability torque''---the product of probability mass and distance from the mean---on either side of the partition.}

\begin{definition}[TTR Algorithm]
Let $T: \mathcal{M} \times \mathbb{Z}_{\geq 0} \rightarrow \mathcal{M}$ be the TTR algorithm that takes a probability measure $\mu$ and parameter $n \geq 0$ as input, returning a discrete measure with $2^n$ atoms. The algorithm proceeds recursively:

\begin{enumerate}
    \item \textbf{Base case}: If $n = 0$, return $T(\mu, 0) = \delta_{\bar{\mu}}$ where $\bar{\mu} = \int x \, d\mu(x)$ is the mean of $\mu$.
    
    \item \textbf{Recursive case}: If $n \geq 1$, partition the support using the mean:
    \begin{itemize}
        \item Define $\Omega_- = \{x : x < \bar{\mu}\}$ and $\Omega_+ = \{x : x \geq \bar{\mu}\}$
        \item Construct conditional measures $\mu_\pm = \mathbf{1}_{\Omega_\pm} \mu / \mu(\Omega_\pm)$
        \item Return: $T(\mu, n) = \mu(\Omega_-) T(\mu_-, n-1) + \mu(\Omega_+) T(\mu_+, n-1)$
    \end{itemize}
\end{enumerate}
\end{definition}

This recursive splitting process creates a binary tree structure where each node corresponds to a partition based on the conditional mean. The TTR method achieves near-optimal convergence rates for many distributions when measured using the Wasserstein-1 distance~\cite{bilgin_quantization_2025}, while maintaining computational efficiency and numerical stability under distributional arithmetic operations.

\begin{figure}[!t]
\centering
\includegraphics[width=1.0\textwidth]{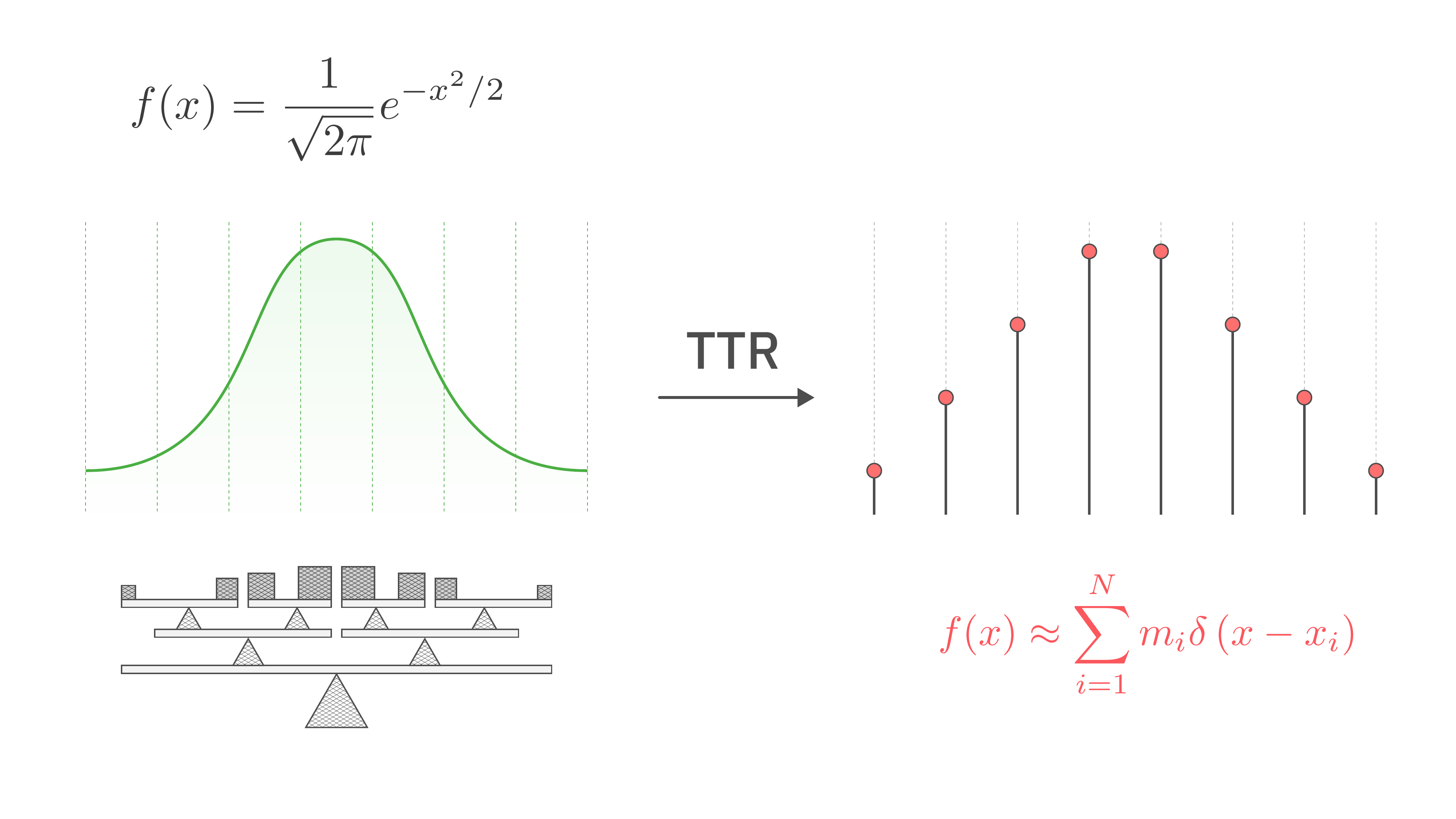}
\label{fig:ttr}
\end{figure}

The key advantage of TTR over other discrete representation methods is its preservation of important statistical properties (particularly the mean) throughout the recursive construction. This property makes TTR particularly well-suited for applications involving sequential arithmetic operations on uncertain quantities, which is precisely the setting we encounter when implementing thermodynamic algorithms digitally.

\subsection*{Approximate SPD Matrix Inversion via Ornstein-Uhlenbeck Dynamics}\label{sec:ou_process}
The connection between thermodynamic processes and linear algebra represents an interesting insight in the emerging field of thermodynamic
computing~\cite{aifer_thermodynamic_2024, coles_thermodynamic_2023}. Unlike quantum approaches to linear algebra acceleration, which face
significant hardware challenges~\cite{harrow_quantum_2009}, thermodynamic methods exploit naturally occurring thermal fluctuations in classical
systems to perform useful computation. This approach treats stochasticity as a computational resource rather than a source of error,
fundamentally reversing the traditional digital computing paradigm.

This section presents a derivation of a known result connecting approximate matrix inversion to the steady-state properties of
Ornstein-Uhlenbeck processes~\cite{uhlenbeck_theory_1930, gardiner_handbook_1985, aifer_thermodynamic_2024}.
This stochastic formulation provides the mathematical basis for establishing the bridge between analog
thermodynamic systems and digital optimization methods.

\begin{theorem}[SPD Matrix Inversion via Ornstein-Uhlenbeck Process Covariance]
\label{theorem:ou_ami}
Let $A \in \mathbb{R}^{n \times n}$ be a symmetric positive definite matrix, and let $D = \sqrt{2}I_n$.
Consider the Ornstein-Uhlenbeck process with initial condition $X_0 = 0$. Then the \bl{covariance} matrix of this process converges to the inverse of $A$ in the long-time limit:
\begin{equation}
\lim_{t \to \infty} \Sigma_t = A^{-1},
\end{equation}
where $\Sigma_t$ is the \bl{covariance} matrix given in \eqref{e.autocorrelation_matrix}.
\end{theorem}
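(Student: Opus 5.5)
The plan is to start from the closed-form covariance in Proposition~\ref{prop:ou_properties}, equation~\eqref{e.autocorrelation_matrix}, specialised to $X_0=0$ and $D=\sqrt{2}I_n$. Since $A$ is symmetric, $A^\top = A$, and $DD^\top = 2I$. After the substitution $u = t-s$ the covariance becomes
\begin{equation*}
\Sigma_t = 2\int_0^t e^{-Au}e^{-Au}\,du = 2\int_0^t e^{-2Au}\,du,
\end{equation*}
where the two exponentials combine because $A$ commutes with itself.

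The key step is to evaluate this integral in closed form. By the derivative identity $\frac{d}{du}e^{-2Au} = -2Ae^{-2Au}$ recalled in the preliminaries, and since $A$ is invertible (positive definite), we have
\begin{equation*}
\frac{d}{du}\left(-\tfrac{1}{2}A^{-1}e^{-2Au}\right) = e^{-2Au}.
\end{equation*}
This gives
\begin{equation*}
\Sigma_t = 2\cdot\tfrac{1}{2}A^{-1}\left(I - e^{-2At}\right) = A^{-1}\left(I - e^{-2At}\right).
\end{equation*}
As an alternative route, one can diagonalise $A = Q\Lambda Q^\top$ with $Q$ orthogonal and $\Lambda = \mathrm{diag}(\lambda_1,\dots,\lambda_n)$, where every $\lambda_j>0$. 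Then $e^{-2Au} = Q\,\mathrm{diag}(e^{-2\lambda_j u})\,Q^\top$, and the integral reduces to the scalar integrals $2\int_0^t e^{-2\lambda_j u}\,du = \lambda_j^{-1}(1-e^{-2\lambda_j t})$. This route also makes the next step transparent.

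Finally, take $t\to\infty$. In the eigenbasis,
\begin{equation*}
\|e^{-2At}\|_2 = e^{-2\lambda_{\min}(A)t} \to 0,
\end{equation*}
because $\lambda_{\min}(A)>0$. Hence
\begin{equation*}
\|\Sigma_t - A^{-1}\|_2 \le \|A^{-1}\|_2\, e^{-2\lambda_{\min} t} \to 0,
\end{equation*}
which gives $\lim_{t\to\infty}\Sigma_t = A^{-1}$. The bound also shows that convergence is exponential at rate $2\lambda_{\min}(A)$.

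I expect the main obstacle to be justifying these matrix-calculus manipulations rigorously rather than formally. Specifically, the integral of the matrix exponential must be computed entrywise, and its convergence to zero must be controlled, which is where positive definiteness is genuinely used. The cleanest way to handle this is to work throughout in the orthogonal eigenbasis of $A$. There every statement reduces to $n$ independent scalar computations, and orthogonal invariance of the norm transfers the limit back to the original coordinates.
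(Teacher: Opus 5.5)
Your proposal is correct and follows the paper's proof essentially step for step: specialise the covariance formula to $D=\sqrt{2}I_n$ and $A^\top=A$, substitute $u=t-s$, integrate with the antiderivative $-\tfrac12 A^{-1}e^{-2Au}$ to get $\Sigma_t = A^{-1}(I_n - e^{-2At})$, and let $t\to\infty$ using positivity of the eigenvalues. Your explicit bound $\|\Sigma_t - A^{-1}\|_2 \le \|A^{-1}\|_2\, e^{-2\lambda_{\min}(A)t}$ only makes the paper's final limit step quantitative, consistent with its remark that the convergence rate is governed by the smallest eigenvalue of $A$.
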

\begin{proof}
By recalling the \bl{covariance} matrix \eqref{e.autocorrelation_matrix} (which holds for $X_0 = 0$), we see that when $A$ is symmetric and $D = \sqrt{2}I_n$ then the expression simplifies to
\begin{equation}
    \Sigma_t = 2 \int_0^t e^{-A(t-s)}e^{-A^\top (t-s)} ds =  2 \int_0^t e^{-2A(t-s)} ds.
\end{equation}
By performing the change of variables $u = t-s$ we get 
\begin{align}
\Sigma_t &= 2 \int_t^0 e^{-2Au} (-du) = 2 \int_0^t e^{-2Au} du = 2 \left[ -\frac{1}{2}A^{-1}  e^{-2Au} \right]_0^t \\
&= -A^{-1} \left( e^{-2At} - I_n \right) = A^{-1} \left( I_n - e^{-2At} \right).
\end{align}
Since $A$ is positive definite, all eigenvalues of $A$ are strictly positive, which implies that $$\lim_{t \to \infty} e^{-2At} = 0.$$ Therefore:
\begin{equation}
\lim_{t \to \infty} \Sigma_t = A^{-1} \left( I_n - 0 \right) = A^{-1}.
\end{equation}
\end{proof}

\begin{corollary}[Numerical Matrix Inversion Algorithm]
The inverse of a symmetric positive definite matrix $A$ can be approximated by solving the Ornstein-Uhlenbeck SDE numerically and computing the sample \bl{covariance} matrix of the solution for sufficiently large time $t$.
\end{corollary}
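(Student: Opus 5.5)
The corollary asks only that we turn Theorem~\ref{theorem:ou_ami} into a computable procedure, so the plan is to chain three approximations and control each one separately: (i) truncation in time, (ii) time discretization of the SDE, and (iii) replacing the exact covariance by a sample covariance. I will treat the statement as asserting that, for any tolerance $\varepsilon>0$, these three parameters can be chosen so the output lies within $\varepsilon$ of $A^{-1}$ (in probability, for the sampling step).

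\textbf{Truncation.} Theorem~\ref{theorem:ou_ami} gives the exact formula $\Sigma_t = A^{-1}(I_n - e^{-2At})$. Hence
\[
\|\Sigma_t - A^{-1}\|_2 = \|A^{-1}e^{-2At}\|_2 \le \lambda_{\min}(A)^{-1}e^{-2\lambda_{\min}(A)t}.
\]
This shows that a finite horizon $T$ achieves $\varepsilon/3$.

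\textbf{Discretization.} Use the Euler--Maruyama scheme of Definition~\ref{def:discrete_ou} with $x_0=0$. By Lemma~\ref{lem:autocorr_evolution} and Lemma~\ref{lem:commute}, with $\Sigma_0=0$, the discrete covariance satisfies $\Sigma_{i+1}=(I-\Delta t A)^2\Sigma_i+2\Delta t I$. I will solve this linear recursion in the eigenbasis of $A$. For an eigenvalue $\lambda$ it gives the scalar recursion $s_{i+1}=(1-\Delta t\lambda)^2 s_i+2\Delta t$. Its fixed point is
\[
s_\infty=\frac{2\Delta t}{1-(1-\Delta t\lambda)^2}=\frac{1}{\lambda(1-\Delta t\lambda/2)},
\]
and it converges geometrically provided $|1-\Delta t\lambda|<1$. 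The discrete stationary covariance is therefore $A^{-1}(I-\tfrac{\Delta t}{2}A)^{-1}=A^{-1}+O(\Delta t)$. Choosing $\Delta t<2/\lambda_{\max}(A)$ small enough, and then $N=T/\Delta t$ steps, makes $\|\Sigma_N-A^{-1}\|\le 2\varepsilon/3$. This step is where I expect the main care to be needed. The bias is not removed as $N\to\infty$, so $\Delta t$ itself must shrink, while stability must hold at the same time. The explicit eigen-decomposition above handles both uniformly in $\lambda\in[\lambda_{\min},\lambda_{\max}]$.

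\textbf{Sampling.} The iterates $x_N$ are Gaussian with covariance $\Sigma_N$. Independent copies $x_N^{(1)},\dots,x_N^{(M)}$ give the estimator $\hat\Sigma=\frac1M\sum_k x_N^{(k)}x_N^{(k)\top}$, which is unbiased. Each entry has finite variance, bounded by fourth Gaussian moments, i.e.\ $(\Sigma_N)_{jj}(\Sigma_N)_{ll}+(\Sigma_N)_{jl}^2$. Chebyshev's inequality, or the weak law of large numbers, then gives $\|\hat\Sigma-\Sigma_N\|_F\le\varepsilon/3$ with high probability, at rate $O(M^{-1/2})$.

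\textbf{Alternative sampling scheme.} If one instead time-averages along a single long trajectory, as Thermox does, I would replace this step by an ergodic argument. The discrete chain is a stable linear Gaussian AR(1) process, so its time averages converge almost surely to the stationary covariance. This is a fallback, not the main route.

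Combining the three bounds with the triangle inequality yields the corollary.
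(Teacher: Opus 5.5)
Your proof is correct, and it goes well beyond what the paper does. The paper gives no argument for this corollary. It records it as an immediate consequence of Theorem~\ref{theorem:ou_ami}, i.e.\ of $\Sigma_t = A^{-1}(I_n - e^{-2At}) \to A^{-1}$, and leaves implicit the passage from the exact covariance $\Sigma_t$ to ``solve the SDE numerically and take the sample covariance.''

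You keep that theorem as your truncation ingredient and make its rate $\lambda_{\min}(A)^{-1}e^{-2\lambda_{\min}(A)t}$ explicit. You then add two pieces the paper never supplies:
\begin{itemize}
\item an exact solution of the Euler--Maruyama covariance recursion from Lemmas~\ref{lem:autocorr_evolution} and~\ref{lem:commute}, namely $\Sigma_N = A^{-1}(I-\tfrac{\Delta t}{2}A)^{-1}\bigl(I-(I-\Delta t A)^{2N}\bigr)$;
\item an Isserlis/Chebyshev bound on the Monte Carlo estimator.
\end{itemize}

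The payoff is that the corollary becomes a genuine numerical statement with the right quantifiers. The stationary discrete covariance is $A^{-1}(I-\tfrac{\Delta t}{2}A)^{-1}$ rather than $A^{-1}$. So ``sufficiently large $t$'' alone is not enough at a fixed step; one must also let $\Delta t\to 0$ or correct the bias. The paper's phrasing glosses over this point. The bias is also not small at the paper's $\Delta t^* = 1/(\lambda_{\max}+\lambda_{\min})$: there the bias factor in the top eigendirection is $2(\kappa+1)/(\kappa+2)$, which tends to $2$ as $\kappa(A)$ grows. The paper's version buys only brevity.

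Two small remarks on your write-up.

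First, given the closed form for $\Sigma_N$, the separate continuous truncation step is redundant. Fix $\Delta t$ so that $\|A^{-1}(I-\tfrac{\Delta t}{2}A)^{-1}-A^{-1}\|_2 \le \varepsilon/3$, then take $N$ large, since $|1-\Delta t\lambda|<1$ on the spectrum.

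Second, if you do want to inherit the continuous bound at $N\Delta t = T$, you need $(1-\Delta t\lambda)^{2N}\le e^{-2\lambda T}$. This requires $0\le 1-\Delta t\lambda$, i.e.\ $\Delta t\,\lambda_{\max}(A)\le 1$, not merely $\Delta t<2/\lambda_{\max}(A)$. The discrete transient also carries the extra factor $(1-\Delta t\lambda/2)^{-1}$, so leave a little slack in $T$. Both points are harmless, since you take $\Delta t$ small anyway.
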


\begin{remark}
This theorem provides a probabilistic method for SPD matrix inversion. The convergence rate depends on the smallest eigenvalue of $A$, as this determines how quickly $e^{-2At}$ approaches zero.
\end{remark}

The Ornstein-Uhlenbeck formulation corresponds directly to the dynamics of thermodynamic systems, where such processes arise from thermal fluctuations.

\subsection*{Analog Implementation via Thermodynamic Computing}

In thermodynamic computing~\cite{aifer_thermodynamic_2024}, a physical system is set up so that the statistics of the distribution of states at equilibrium are a useful quantity. For example, by choosing a potential energy with a quadratic form, the mean of the equilibrium distribution is the solution to a linear system of equations.

That is, consider the system that has a potential energy given by
\begin{equation}
\label{eq:quadratic_form}
    U(x) = \frac{1}{2}x^\mathrm{T}Ax - b^\mathrm{T}x,
\end{equation}
where $x\in \mathbb{R}^n$ is the state of the system, $A \in \mathbb{R}^{n \times n}$ is the symmetric positive definite matrix and $b \in \mathbb{R}^n$ is a vector. The steady-state distribution of a system described by the quadratic form in Equation~\eqref{eq:quadratic_form} can be derived to be a normal distribution given by
\begin{equation}
\label{eq:equilibrium_distribution}
    \mathcal{N}\left(A^{-1}b, \frac{1}{\beta}A^{-1}\right),
\end{equation}
where $\beta$ is the inverse temperature once thermal equilibrium is reached. Equation~\eqref{eq:equilibrium_distribution} shows that the mean of this steady-state normal distribution is the solution to the system of linear equations $Ax = b$. By setting $b=0$, we can simplify Equation~\eqref{eq:quadratic_form} and see that the \bl{covariance} of the steady-state distribution is the matrix inverse, up to a multiplicative constant. This \bl{covariance} matrix, and therefore the matrix inverse, can be found in an analog manner by carrying on the following integration using analog multipliers and integrators,
\begin{equation}
\label{eq:covariance_integration_thermodynamic}
    x_{i}x_{j}=\frac{1}{\tau}\int_{t_{o}}^{\tau}x_{i}(t)x_{j}(t)dt,
\end{equation}
where $\tau$ is the integration time and $t_{0}$ is the time given to the system to evolve under its dynamics; these values are chosen such that a desired accuracy is reached. We note that Equation~\eqref{eq:covariance_integration_thermodynamic} computes the entry at the ($i, j$)-th position of the matrix.
{Reading the results out of the analog circuit requires $nk$ measurements, where $k$ is the number of samples per output needed to estimate the covariance matrix to the desired accuracy, and $n$ is the number of outputs (each producing a length-$k$ array of samples). Due to the ergodicity of the Ornstein-Uhlenbeck process, a single physical instantiation of the system with $n$ degrees of freedom is sufficient to compute all $n^2$ elements of the inverse matrix by integrating the pairwise products of the state variables over a single trajectory~\cite{aifer_thermodynamic_2024}.}

In order to simulate the method of calculating an SPD matrix inverse, Aifer \et~\cite{aifer_thermodynamic_2024} noted that the Langevin dynamics of their chosen system resolves to \bl{an} Ornstein-Uhlenbeck process and the \bl{covariance} of its states after a sufficiently long time period provides us the matrix inverse; see Theorem~\ref{theorem:ou_ami}.

\subsection*{Known Limitations of Thermodynamic Approaches}

Despite their theoretical advantages (speed and energy \cite{bartosik2024thermodynamicalgorithmsquadraticprogramming, Melanson2025}) thermodynamic methods for linear algebra have inherent
limitations that prevent them inverting matrices that are not symmetric positive definite \cite{aifer_thermodynamic_2024}.
However, applications using thermodynamic methods can transform general matrices by adjusting
the normal equations as $A^T A x = A^T b$ at the cost of squaring the condition
number.

In the following {\nameref{sec:digital_implementation_via_distributional_arithmetic} section}, we introduce
a method of simulating the Ornstein-Uhlenbeck process using deterministic arithmetic carried out on
distributional representations on a processor that can perform arithmetic on digital representations of probability distributions.
This approach addresses many of the limitations and challenges inherent in analog thermodynamic
implementations. {It
provides insight into the question of whether an external source of stochasticity is necessary in all
problems of analog thermodynamic systems it mimics. These insights lead to the further foundational insights
we present later in the} {\nameref{sec:gradient_descent} section.}

\subsection*{Digital Implementation Approaches: {Langevin Dynamics} Simulation}

An alternative to both traditional {simulation of the Langevin dynamics} and analog thermodynamic
hardware involves implementing the Ornstein-Uhlenbeck process through computational approaches that avoid the engineering
challenges of physical systems while maintaining mathematical fidelity.

\begin{figure}[!t]
\centering
\includegraphics[clip, trim={3cm 0cm 0cm 6.7cm}, width=1.08\textwidth]{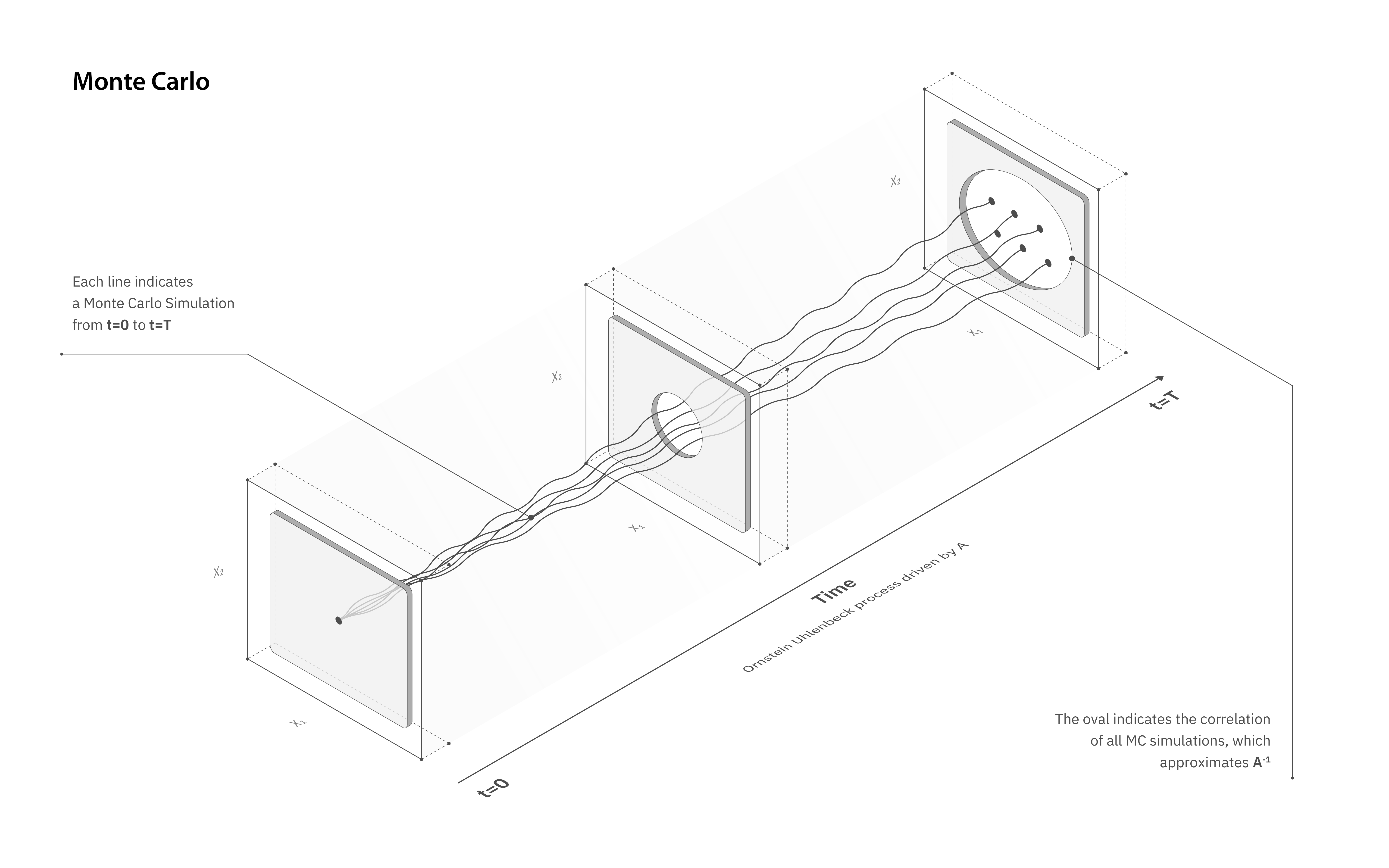}
\label{fig:mc_ou}
\end{figure}

The standard {Langevin dynamics} simulation implementation discretizes the Ornstein-Uhlenbeck SDE
using the Euler-Maruyama method~\cite{kloeden_numerical_1992}. At each time step, \bl{the algorithm draws random variates} to
represent thermal noise, generating an ensemble of trajectories that provide statistical characterization of the
process behavior (Algorithm~\ref{alg:mc_euler_maruyama_ou} \bl{and} Figure 3). This approach generates $M$
samples at each of the $N$ timesteps, with estimation error decreasing proportionally to $1/\sqrt{M}$.

\begin{algorithm}[htbp]
\caption{{Langevin Dynamics} Euler-Maruyama Method}
\label{alg:mc_euler_maruyama_ou}
\begin{algorithmic}[1]
\REQUIRE Initial condition $X_0 \in \mathbb{R}^n$, matrices $A, D \in \mathbb{R}^{n \times n}$, time step $\Delta t > 0$, final time $T$, number of {independent trajectories} $M$
\ENSURE Sample paths $\{X^{(m)}_k\}_{k=0,m=1}^{N,M}$ where $N = T/\Delta t$
\FOR{$m = 1$ to $M$}
    \STATE Initialize $X^{(m)}_0 = X_0$ and set $k = 0$
    \WHILE{$k \Delta t < T$}
        \STATE Generate $Z^{(m)}_k \in \mathbb{R}^n$ with $Z^{(m)}_k \sim \mathcal{N}(0, I_n)$
        \STATE Compute drift: $d^{(m)}_k = -AX^{(m)}_k$
        \STATE Compute diffusion: $s^{(m)}_k = D\sqrt{\Delta t} Z^{(m)}_k$
        \STATE Update: $X^{(m)}_{k+1} = X^{(m)}_k + d^{(m)}_k \Delta t + s^{(m)}_k$
        \STATE $k \leftarrow k + 1$
    \ENDWHILE
\ENDFOR
\RETURN $\{X^{(m)}_k\}_{k=0,m=1}^{N,M}$
\end{algorithmic}
\end{algorithm}

\subsection*{{Digital Implementation Approaches: Distributional Arithmetic}}\label{sec:digital_implementation_via_distributional_arithmetic}

A more \bl{efficient} approach involves direct arithmetic on discretized probability
distributions, such as the Telescoping Torques Representation (TTR) discussed in
{the \nameref{sec:math_prelims} section}. Rather than generating random samples, this method performs \bl{arithmetic on representations
of distributions} directly, where each operation between uncorrelated random
variables is implemented as a convolution of their discrete representations~\cite{tsoutsouras_laplace_2022}.

This approach enables implementation of the Euler-Maruyama discretization of the Ornstein-Uhlenbeck process
without {simulating several independent trajectories}. Instead of generating random variates
at each time step, the method performs arithmetic directly on the discrete representation of the standard
normal distribution, propagating the full probability distribution through each iteration (Algorithm~\ref{alg:euler_maruyama_ou_multivariate} and Figure 4).
This eliminates {sampling} variance while maintaining the complete statistical characterization of the process.

While \bl{arithmetic on representations of distributions}
eliminates {sampling} variance, it faces memory-related challenges in tracking
correlations between variables. For matrix inversion applications, the interdependencies between
matrix elements significantly affect computational accuracy, leading to exponential memory
requirements when correlations are tracked naively. The naive approach of tracking all possible
correlations leads to a curse of dimensionality, as the joint distribution over $n$ correlated
variables requires enumeration of all possible outcome combinations, resulting in computational
complexity that grows as $O(k^n)$ where $k$ is the discrete representation size.

However, in forthcoming work, we present a tensor train decomposition \cite{oseledets2011tensor} methodology
that efficiently represents high-dimensional correlated distributions while avoiding the exponential memory scaling.
This approach enables a practical implementation of matrix inversion and other multivariate
computations on representations of probability distributions.

\subsection*{Reduction to Simple Optimization}

The computational overhead of both {stochastic simulation} and distributional
correlation tracking motivates investigation of whether the underlying mathematical structure
can be extracted more directly. \bl{As we demonstrate in the {\nameref{sec:gradient_descent} section}, the entire computational pathway for
approximate SPD matrix inversion via the Ornstein-Uhlenbeck process can be elegantly reduced
to a simple deterministic optimization algorithm}.

This reduction reveals that the complex machinery developed for implementing thermodynamic processes digitally,
while mathematically rigorous, is not computationally essential for matrix inversion. \bl{The thermal noise that drives the physical process is algorithmically
redundant for matrix inversion, as the essential dynamics reduce to deterministic gradient descent on the covariance evolution. This insight simplifies implementation
by removing the need for stochastic simulation and provides theoretical clarity about the optimization structure underlying thermodynamic linear algebra computations.}

\begin{figure}[!t]
\centering
\includegraphics[clip, trim={2cm 1cm 0cm 0cm}, width=1.08\textwidth]{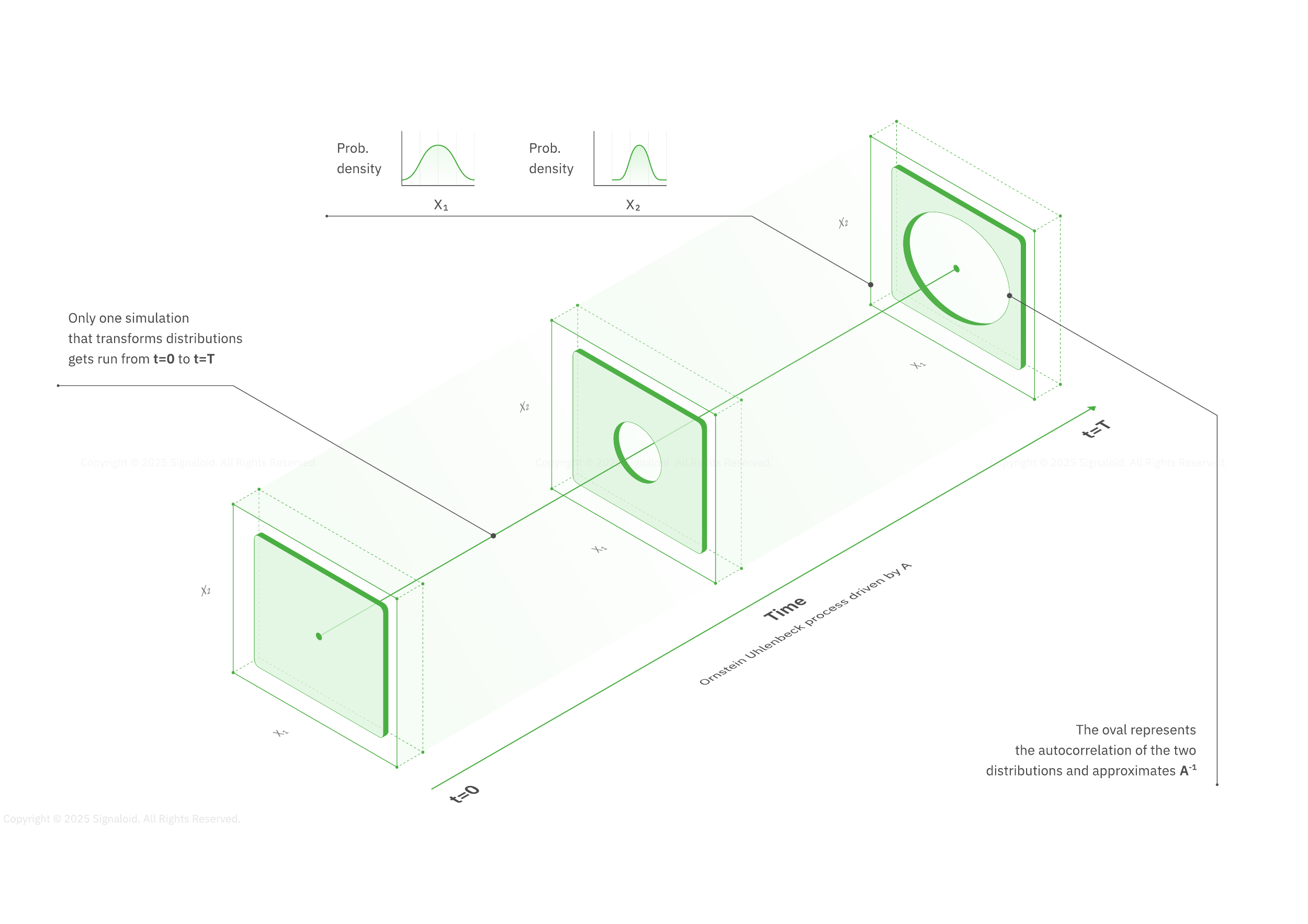}
\label{fig:signaloid_mc}
\end{figure}

\begin{algorithm}[!t]
\caption{Deterministic Distributional Euler-Maruyama Method}
\label{alg:euler_maruyama_ou_multivariate}
\begin{algorithmic}[1]
\REQUIRE Initial condition $X_0 \in \mathbb{R}^n$, matrices $A, D \in \mathbb{R}^{n \times n}$, time step $\Delta t > 0$, final time $T$
\ENSURE Approximate solution path $\{X_k\}_{k=0}^{N}$ where $N = T/\Delta t$
\STATE Initialize $X_0$ and set $k = 0$
\WHILE{$k \Delta t < T$}
    \STATE {Construct $Z_k$ as the discrete representation of $\mathcal{N}(0, I_n)$ (deterministic distribution object, not a random sample)}
    \STATE Compute drift term: $d_k = -AX_k$
    \STATE Compute diffusion term: $s_k = D\sqrt{\Delta t} Z_k$
    \STATE Update: $X_{k+1} = X_k + d_k \Delta t + s_k$ {\COMMENT{Operations denote distributional arithmetic}}
    \STATE $k \leftarrow k + 1$
\ENDWHILE
\RETURN $\{X_k\}_{k=0}^{N}$
\end{algorithmic}
\end{algorithm}

\section*{Data Availability}
{The numerical experiments in this study use synthetically generated test matrices 
constructed via spectral decomposition with predetermined condition numbers, 
as described in the Numerical Experiments section. No external datasets were analysed.}

\section*{Code Availability}
{The Python implementations used for the numerical experiments are available 
from the corresponding author upon request.}

\section*{Acknowledgements}

{This research was carried out under contract to the 
UK Advanced Research and Innovation Agency (UK ARIA) 
under contract number NACB-PR01-P02.}

\section*{Author Contributions}

{The manuscript was written by G.K., M.S., J.P., O.H.E, J.M., and P.S.M. 
The algorithms were proposed by M.S., G.K., and P.S.M. The main analytical results were derived 
by M.S. The numerical experiments were performed by 
G.K. The figures were created by 
G.K. and J.M. All authors have read and approved the manuscript.}

\section*{Competing Interests}

{The authors declare no competing financial or non-financial interests.}

\bibliography{references}

\section*{Figure Legends}

\textbf{Figure 1} $|$ A visual representation of the TTR discretization.

\textbf{Figure 2} $|$ A visual representation of how Langevin dynamics simulation is applied to approximate the inverse of an SPD matrix via the Ornstein-Uhlenbeck process.

\textbf{Figure 3} $|$ A visual representation of how random variable transformations via distributional arithmetic is applied to approximate the inverse of an SPD matrix via the Ornstein-Uhlenbeck process.

\textbf{Figure 4} $|$ Algorithmic comparison of Preconditioned Gradient Descent (left) and stochastic simulation (right) for approximate symmetric positive definite (SPD) matrix inversion. Both algorithms execute identical computational steps, indicated by matching background colours in the pseudocode: initialisation $\Sigma_0 = \alpha I$ (purple), where $\Sigma_0$ is the initial covariance estimate, $\alpha > 0$ is a scalar initialisation parameter, and $I$ is the identity matrix; iteration structure (orange), where $T$ is the total simulation time and $\Delta t$ is the time step; residual computation $R_i = \Sigma_i A - I$ (red), where $R_i$ is the residual at iteration $i$, $\Sigma_i$ is the current covariance estimate, and $A$ is the SPD matrix to be inverted; covariance update (green); and convergence criteria (pink), evaluated using the Frobenius norm $\lVert \cdot \rVert_F$ against tolerance $\epsilon$. The boxed stochastic machinery on the right (drift computation $\mathrm{drift}_i = -A x_i$ and noisy state evolution $x_{i+1} = x_i + \mathrm{drift}_i \Delta t + \sqrt{2 \Delta t}\, Z_i$, where $x_i$ is the state vector at iteration $i$ and $Z_i \sim \mathcal{N}(0, I)$ is a standard Gaussian noise term) represents the expensive computational overhead that Theorem~\ref{theo:ou_gradient_descent} proves unnecessary. The symbol $\otimes$ denotes the outer product and $\langle \cdot \rangle$ denotes the expectation.

\textbf{Figure 5} $|$ Validation of GD against the stochastic baseline (Thermox). Left: Computational time required by Thermox as a function of sample count, with the GD baseline time shown as a horizontal reference line. Right: Approximation error versus number of samples, with the theoretical Langevin dynamics simulation convergence rate of $1/\sqrt{N_s}$ shown for comparison.

\textbf{Figure 6} $|$ Sensitivity analysis of learning rate performance across different matrix condition numbers. Left panel: Number of iterations required for convergence ($\epsilon = 10^{-6}$) as a function of learning rate. Right panel: Spectral radius as a function of learning rate.

\end{document}